\numberwithin{equation}{section}
\theoremstyle{plain}
\newtheorem{theorem}{Theorem}[section]
\newtheorem{proposition}[theorem]{Proposition}
\newtheorem{lemma}[theorem]{Lemma}
\newtheorem{remark}[theorem]{Remark}
\newcommand{\comment}[1]{}
\newcommand{\set}[1]{{\left\{#1\right\}}}
\newcommand{\norm}[1]{{\left |#1\right |}}
\newcommand{\normal}{\mathrm}
\newcommand{\pa}[1]{{\left(#1\right)}}
\newcommand{\T}{\mathbb{T}}
\newcommand{\Z}{\mathbb{Z}}
\newcommand{\R}{\mathbb{R}}
\newcommand{\teta}{\theta}
\newcommand{\eps}{\varepsilon}
\renewcommand{\Im}{\operatorname{Im}}
\newcommand{\Mat}{\operatorname{Mat}}
\newcommand{\GL}{\operatorname{GL}}
\newcommand{\eqsys}[1]{{\left\{\begin{array}{ll}#1\end{array}\right.}}
\newcommand{\conj}{h\circ R_{2\pi\alpha}\circ h^{-1}}
\newcommand{\gr}[1]{\textbf{#1}}%grassetto
\newcommand{\upp}[1]{\uppercase{#1}}
\newcommand{\id}{\operatorname{id}}
\newcommand{\C}{{\mathbb C}}
\newcommand{\cC}{\mathcal C}
\newcommand{\cG}{{\mathcal G}}
\newcommand{\cV}{{\mathcal V}}
\newcommand{\abs}[1]{\left| #1 \right|}
\newcommand{\sq}[1]{{\left[#1\right]}}
\newcommand{\Lip}{\operatorname{Lip}}
\newcommand{\Gr}{\operatorname{Gr}}
\def\l@subsection{\@tocline{2}{0pt}{2.5pc}{5pc}{}}
\def\l@subsubsection{\@tocline{3}{0pt}{4.5pc}{5pc}{}}
\begin{document}

\title{Attractive invariant circles \`a la Chenciner}
%{On the growth of high Sobolev norms for the completely resonant NLS on $\mathbb{T}^d$}

\date{}

\author{Jessica Elisa Massetti}
\address{\scriptsize{Dipartimento di Matematica e Fisica, Universit\`a degli Studi RomaTre, Largo San Leonardo Murialdo 1, 00144}}
\email{jessicaelisa.massetti@uniroma3.it}

%\thanks{ {\em } ....}  

%\keywords{.....}
%
\subjclass[2010]{37C05, 37E40, 37D10}

\begin{abstract}   
Studying general perturbations of a dissipative twist map depending on two parameters, a frequency $\nu$ and a dissipation $\eta$, the existence of a Cantor set $\mathcal C$ of curves in the $(\nu,\eta)$ plane such that the corresponding equation possesses a Diophantine quasi-periodic invariant circle can be deduced, up to small values of the dissipation, as direct consequence of a normal form Theorem in the spirit of  R\"ussmann and the ``elimination of parameters" technique. These circles are normally hyperbolic as soon as $\eta\not=0$, which implies that the equation still possesses a circle of this kind for values of the parameters belonging to a neighborhood $\mathcal V$ of this set of curves. Obviously, the dynamics on such invariant circles is no more controlled and may be generic, but the normal dynamics is controlled in the sense of their basins of attraction.\\
As it is expected, by classical graph-transform method we are able to determine a first rough region where the normal hyperbolicity prevails and a circle persists, for a strong enough dissipation $\eta\sim O(\sqrt{\eps}),$ $\eps$ being the size of the perturbation. Then, through normal-form techniques, we shall enlarge such regions and determine such a (conic) neighborhood $\cV$, up to values of dissipation of the same order as the perturbation, by using the fact that the proximity of the set $\mathcal C$ 
allows, thanks to R\"ussmann's translated curve theorem, to introduce local coordinates of the type (dissipation, translation) similar to the ones introduced by Chenciner in \cite{Chenciner:1985}. 
\end{abstract}  

\keywords{Non conservative twist maps, invariant circles, elimination of parameters, normal forms.}  

\maketitle

\null\hfill
\textit{\`A Alain, mentor charmant et ami.}

%\setcounter{tocdepth}{1}
%\tableofcontents

\section{Introduction and results}

Let $\T = \R/2\pi\Z$. For $(\nu,\eta) \in\R\times I_{\eta_0},\, $ where $I_{\eta_0} = [-\eta_0,\eta_0]\setminus \set{0},\, \eta_0>0 $, let us consider the two parameters family of twist maps of the cylinder 
\begin{equation}\label{diffeo P}
P: \T\times\R \to  \T\times\R \quad \quad P(\teta,\rho)= (\teta + 2\pi\nu + \frac{1-e^{-2\pi\eta}}{\eta}\rho, \rho e^{-2\pi\eta}). 
\end{equation}
The circle $C_0 = \T\times \set{\rho=0}$ is invariant for $P$, normally attractive (resp. repulsive) whenever $\eta >0$ (resp. $\eta < 0)$, and carries the linear dynamics $\teta \mapsto \teta + 2\pi\nu$. Note that the above maps, which in the regime $\eta\in (0,\eta_0]$ are in the class of the so-called \emph{dissipative} twist maps, extend continuously to symplectic twist ones as $\eta\to 0$ and arise naturally as time $2\pi$-maps of the classic dissipative spin-orbit model \footnote{This problem concerns the rotations of a tri-axial non rigid satellite (or planet), whose center of mass revolves on a given Keplerian elliptic orbit, focused on a fixed massive point.} in Celestial mechanics, before taking into account possible non-autonomous perturbations periodic in time, see \cite{Murray-Dermott}. 
It is worth to mentioning that under appropriate hypotheses on the internal structure, rotation and shape of the body, the non-conservative part of the unperturbed motion is linear in the friction. The perturbing interactions that are classically considered, already very rich in dynamical implications, are of Hamiltonian nature, which makes the corresponding system of equations conformally symplectic \cite{Calleja-Celletti-delaLlave:2013}.\footnote{The literature on the astronomical model is quite huge, we send the reader interested in the over well studied model with Hamiltonian perturbations to the recent \cite{CallejaArxiv, Correia-Delisle} and references therein.} \\However, in the present paper, we shall make use of the equation \eqref{diffeo P} as an expedient for having a map that enjoys nice properties at rest, then we shall study some aspects of its dynamical behavior in the frame of very general, non-Hamiltonian, perturbations so that no special structure is present. Therefore, in  this ``\textit{exercice de pens\'ee}"  we shall propose a method  that can be adapted to very general situations, in the same spirit as Chenciner's works on the bifurcations of elliptic fixed points \cite{Chenciner:1985, Chenciner:1985a, Chenciner:1988}. Let us describe the main features that will guide our study.

\smallskip

By introducing a Diophantine $\alpha\in\R$ and performing the variable's change $(\teta,\rho)\mapsto (\teta, \rho + \nu - \alpha = r)$ the map $P$ reads
\begin{equation}\label{Diophantine P}
	\eqsys{P(\teta,r) = (\teta',r')\\
		\teta' = \teta + 2\pi\nu + \frac{(1 - e^{-2\pi\eta})}{\eta}(r - (\nu-\alpha))\\
		r' = (\nu -\alpha) + e^{-2\pi\eta}(r - (\nu -\alpha))\,,}
\end{equation}
and we see that each circle of radius $r$ is ``translated" by the quantity 
\begin{equation*}
	\tau'= r' - r= (e^{-2\pi\eta}-1)(r - (\nu - \alpha))
\end{equation*}
and ``rotated" by the angle 
\begin{align*}
	\teta' -\teta &= 2\pi\nu + [r - (\nu - \alpha)]\frac{1 - e^{-2\pi\eta}}{\eta} \\
	&= 2\pi\nu - \frac{\tau'}{\eta}.
\end{align*}
In particular, the unique circle of rotation $2\pi\alpha$ is the one with radius
\begin{equation}\label{circle dio}
	r_\alpha = (\nu - \alpha)\sq{1 + \frac{2\pi\eta}{e^{-2\pi\eta} -1}},
\end{equation}
which is translated by a quantity
\begin{equation*}
	\tau_\alpha= 2\pi\eta (\nu - \alpha).
	\label{translation}
\end{equation*}

\smallskip
The (unique) choice of the frequency-parameter $\nu = \alpha$ yields an invariant circle $(\teta, r_\alpha = 0)$ of Diophantine rotation $2\pi\alpha$, whose normal dynamics is  characterized by $\eta$. \\
This calls for an important comment.
In general, if we are given a twist map $P'$ that preserves an invariant $2\pi$-Diophantine circle and ask if some $Q'$ close enough to $P'$ still admits an invariant circle on which the dynamics is conjugated to $\teta\mapsto  \teta + 2\pi\alpha$, the celebrated \textit{theorem of the translated curve}  of R\"ussmann \cite{Russmann:1970} tells us that in non-conservative regime, the persistence of such a Diophantine invariant circle  is a phenomenon of co-dimension 1: in general the
invariant curve  does not persist but it is translated in the normal direction\footnote{for a precise statement see Theorem \ref{Russmann theorem}}. Of course, if  one restricts the study to twist maps that enjoy the intersection property or that are exact symplectic (as the ones in Moser's seminal work \cite{Moser:1962}), then the persistence of each Diophantine circle follows automatically from R\"ussmann's result\footnote{According to an anecdote told by Herman during a lecture on the topic, it seems that R\"ussmann found its ``translated curve" while trying to give an alternative proof of Moser's invariant curve theorem}, the area conservation filling
%\footnote{It is remarkable that the property of area-conservation, of infinite co-dimension, plays a role of a co-dimension 1 condition. The infinite co-dimension is found in this 1-codimensional role on each invariant curve of the map.} 
the co-dimension gap in the dynamical conjugacy with a map that preserves the Diophantine circle, see \cite{Bost:1985}.

\smallskip
 
Given $\eps>0$, and given  $ f, g$ two real analytic maps from $\R^2$ to $\R$ that are $2\pi$-periodic in the first variable, with uniformly bounded $C^2$-norm,  we consider the following $\eps$-family of real analytic diffeomorphisms of the annulus

\begin{equation}\label{perturbation}
{Q}(\teta,\rho) = (\teta + 2\pi\nu + \frac{1-e^{-2\pi\eta}}{\eta}\rho + \eps f(\teta,\rho), \rho e^{-2\pi\eta} + \eps g(\teta,\rho))\,.
\end{equation}

For values of $\eps\ll 1$, we shall determine in the space  $(\eps,\eta,\nu)$ , regions of parameters in correspondence of which the maps possess invariant normally attractive (or repulsive) circles, either carrying a Diophantine rotation or a dynamics that is not necessarily controllable.
Our methods are reminiscent of the ideas introduced by A. Chenciner in a series of articles of the $80$'s and, concerning the specific content of the present paper, in the study he carried out in \cite{Chenciner:1985}.

\medskip 

More specifically, in $1985$ A. Chenciner started an analysis of the dynamical properties of generic $2$-parameter  families  of germs of diffeomorphism of $\R^2$ which unfold an elliptic fixed point. In \cite{Chenciner:1985, Chenciner:1985a, Chenciner:1988}, he showed that along a certain curve $\Gamma$ in the space of parameters, we find all the complexity that the dynamics of a germ of generic area preserving diffeomorphism of $\R^2$ presents, in the neighborhood of an elliptic fixed point. Inspired by the first of these works treating the case of invariant curves, the present paper proposes a similar study for the general perturbations $Q$ introduced above. 

In what follows we shall place ourselves in low dissipative regime $0 <\eta \ll 1$ and reason on attractive circles. Analogue results hold for the case of repulsive ones, i.e. the case $\eta < 0$. We shall denote with $\gtrdot$ and $\lessdot$ inequalities that may depend on pure constants.
\noindent
 Our study can be described as follows.\\
 
\noindent
$\bullet$ We first prove that for high enough values of the hyperbolicity $\eta$ (i.e. $\eta\sim \sqrt{\eps}$), the diffeomorphism $Q$ still admits an invariant circle, close to $\rho=0$,
 which is indeed a graph. We show this by means of the graph-transform method, which loosely consists in proving the existence of the invariant graph-curve as the fixed point of a contraction map on a well chosen functional space (of Lipschitz graphs, that a posteriori are in fact $C^1$). The parameters at play\footnote{ i.e. dissipation, perturbation, Lipschitz constant of the graph we look for.} must satisfy certain compatibility conditions in terms of their size so that the above (graph transform) map is well defined on the chosen functional space (see Proposition \ref{condition GT1}). As a consequence, we can detail a first region of parameters in correspondence of which the map admits an invariant circle. This is the content of Section \ref{section graph transf}. The first main result can simply be stated as follows. 

\begin{theorem}
 If $\eta\gtrdot\sqrt{\eps}$, the map $Q$ possesses a unique invariant attractive $C^1$-circle in the vicinity of $C_0 = \T\times \set{\rho=0}$, whose basin of attraction coincides with $\T\times\R$.
	\label{theorem A}
\end{theorem}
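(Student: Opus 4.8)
\emph{Overall strategy.} The plan is to produce the circle as a fixed point of the graph transform on a complete metric space of Lipschitz graphs over $\T$, then to upgrade its regularity to $C^1$ and identify its basin. Look for an invariant circle of the form $\set{\rho = u(\teta)}$ with $u:\T\to\R$ Lipschitz, $\|u\|_\infty\le r$, $\Lip(u)\le L$, where $r,L$ will be fixed later. Given such a $u$, write $\phi_u(\teta):=\teta+2\pi\nu+\frac{1-e^{-2\pi\eta}}{\eta}u(\teta)+\eps f(\teta,u(\teta))$ for the base component of $Q$ along the graph. Since the twist coefficient $\frac{1-e^{-2\pi\eta}}{\eta}$ is bounded by $2\pi$ and $f$ has bounded $C^1$-norm, for $L$ and $\eps$ small enough $\phi_u$ is an orientation-preserving circle diffeomorphism with $\min\phi_u'\ge 1-2\pi L-\eps C$. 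The graph of $u$ is $Q$-invariant if and only if $u$ is a fixed point of
\[
(\mathcal Tu)(\teta):=e^{-2\pi\eta}\,u\bigl(\phi_u^{-1}(\teta)\bigr)+\eps\,g\bigl(\phi_u^{-1}(\teta),\,u(\phi_u^{-1}(\teta))\bigr).
\]

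\emph{Self-map and contraction; the role of $\sqrt\eps$.} Two estimates are needed. First, $\mathcal T$ must preserve $\mathcal B:=\set{\|u\|_\infty\le r,\ \Lip(u)\le L}$: one has $\|\mathcal Tu\|_\infty\le e^{-2\pi\eta}r+\eps\|g\|_\infty\le r$ as soon as $r\gtrdot\eps/\eta$, and $\Lip(\mathcal Tu)\le\bigl(e^{-2\pi\eta}L+\eps C(1+L)\bigr)/\min\phi_u'$, which is $\le L$ once, using $1-e^{-2\pi\eta}\asymp\eta$, one has $\eta L\gtrdot L^2+\eps$. Second, $\mathcal T$ must be a $\|\cdot\|_\infty$-contraction on $\mathcal B$: comparing $\mathcal Tu_1$ and $\mathcal Tu_2$ at a common point, the change of base point is controlled by $\|\phi_{u_1}^{-1}-\phi_{u_2}^{-1}\|_\infty\lesssim\|u_1-u_2\|_\infty/\min\phi'$, giving
\[
\|\mathcal Tu_1-\mathcal Tu_2\|_\infty\le\bigl(e^{-2\pi\eta}(1+CL)+\eps C\bigr)\|u_1-u_2\|_\infty,
\]
and the factor is $<1$ provided $\eta\gtrdot L$ and $\eps\lessdot\eta$. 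All these compatibility conditions — this is the content of Proposition \ref{condition GT1} — are simultaneously satisfied by choosing $L\asymp\sqrt\eps$, which forces precisely $\eta\gtrdot\sqrt\eps$: balancing the Lipschitz amplification $\sim L^2/\eta$ coming from the expansion along the base against the perturbative drift $\sim\eps/\eta$ is exactly what pins the threshold at $\sqrt\eps$. Banach's fixed point theorem then yields a unique $u_\infty\in\mathcal B$; its graph $\mathcal C_\infty$ is the sought invariant circle, and uniqueness of an attractive invariant circle in a small enough neighbourhood of $C_0$ follows from uniqueness of the fixed point together with the fact that the twist and the normal contraction confine any such nearby invariant circle to the class of Lipschitz graphs considered.

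\emph{Regularity and basin.} To see $\mathcal C_\infty\in C^1$, run the graph transform on $1$-jets: $\mathcal T$ lifts to a bundle map over itself whose fibre variable $v\simeq u'$ is contracted with rate $\sim e^{-2\pi\eta}/\min\phi'<1$ under the same smallness assumptions (here the $C^2$-bounds on $f,g$ enter), so the Hirsch--Pugh--Shub fibre-contraction criterion gives $u_\infty\in C^1$. For the basin: since $\|g\|_\infty<\infty$ and $e^{-2\pi\eta}<1$, the annulus $\set{|\rho|\le R_0}$ with $R_0:=\eps\|g\|_\infty/(1-e^{-2\pi\eta})+1$ is absorbing and entered by every orbit; for a point $(\teta_0,\rho_0)$ with orbit $(\teta_n,\rho_n)$, set $w_n:=\rho_n-u_\infty(\teta_n)$ and subtract the invariance relation $u_\infty(\teta_{n+1})=e^{-2\pi\eta}u_\infty(\teta_n)+\eps g(\teta_n,u_\infty(\teta_n))$ from the $\rho$-component of $Q$, obtaining $|w_{n+1}|\le\bigl(e^{-2\pi\eta}+\eps\|g\|_{C^1}\bigr)|w_n|$; for $\eta\gtrdot\sqrt\eps$ the factor is $<1$, so $w_n\to0$ geometrically and the basin of $\mathcal C_\infty$ is all of $\T\times\R$.

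\emph{Main obstacle.} The only genuine difficulty is making the self-map property and the contraction hold together: each iterate amplifies the Lipschitz constant by a factor $1/\min\phi_u'\approx 1+2\pi L$, which caps $L$ from above, while the choice of $r$ and the perturbative error force $L\gtrdot\eps/\eta$ from below; the admissible window for $L$ is nonempty exactly when $\eta\gtrsim\sqrt\eps$. Once the contraction is established, both the $C^1$-upgrade and the basin statement are routine.
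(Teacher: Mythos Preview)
Your approach matches the paper's: graph transform on a space of Lipschitz graphs equipped with the $C^0$-metric, with the self-map and contraction conditions forcing $L\asymp\sqrt\eps$ and hence $\eta\gtrdot\sqrt\eps$ (this is precisely the content of Proposition~\ref{condition GT1} and the subsequent remark on the necessity of $\eta\ge C\sqrt\eps$). The paper organizes the contraction step through the auxiliary pointwise estimate of Lemma~\ref{lemma per contrazione} rather than your direct comparison of $\mathcal Tu_1$ and $\mathcal Tu_2$, but the two computations are equivalent. The $C^1$-upgrade via fibre contraction is what the paper invokes by reference to Hirsch--Pugh--Shub.

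There is, however, a genuine error in your basin argument. You write the ``invariance relation'' as
\[
u_\infty(\teta_{n+1})=e^{-2\pi\eta}u_\infty(\teta_n)+\eps g(\teta_n,u_\infty(\teta_n)),
\]
but this is false: the right-hand side equals $u_\infty\bigl(\phi_{u_\infty}(\teta_n)\bigr)$, whereas $\teta_{n+1}=\Theta(\teta_n,\rho_n)$ depends on the actual orbit value $\rho_n$, not on $u_\infty(\teta_n)$. The horizontal discrepancy $\teta_{n+1}-\phi_{u_\infty}(\teta_n)$ has size $\bigl(\frac{1-e^{-2\pi\eta}}{\eta}+\eps A_f\bigr)|w_n|\le(2\pi+\eps A_f)|w_n|$, so after subtracting correctly you pick up an additional term $\Lip(u_\infty)\cdot|\teta_{n+1}-\phi_{u_\infty}(\teta_n)|\lesssim L|w_n|$, giving
\[
|w_{n+1}|\le\bigl(e^{-2\pi\eta}+\eps\|g\|_{C^1}+2\pi L+O(\eps L)\bigr)|w_n|.
\]
This is exactly the constant of Lemma~\ref{lemma per contrazione} (applied with $\varphi=u_\infty$), and it is still $<1$ under your standing assumptions $L\asymp\sqrt\eps$, $\eta\gtrdot\sqrt\eps$ with a suitable constant; so the conclusion survives, but your stated contraction factor $e^{-2\pi\eta}+\eps\|g\|_{C^1}$ is wrong and the argument as written does not close. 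Note that this missing $2\pi L$ term is not negligible: it is of the same order as $\eta$ itself, so omitting it amounts to ignoring the twist altogether in the normal contraction estimate.
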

%\endgroup 

\noindent
$\bullet$ Secondly, in line with the previous comments on $P$, in the same spirit as A. Chenciner\footnote{In the cited paper the rotation and parameter of normal hyperbolicity were denoted by $\omega$ and $\chi$ respectively.} in \cite{Chenciner:1985} we introduce a Diophantine $\alpha$ and write the diffeomorphism in terms of the translation $\tau_\alpha$. After an appropriate change of coordinates that localizes us close to the (translated) circle of rotation $\alpha$ (recall \eqref{circle dio}) the perturbation $Q$ is in a form like
\begin{equation}
	Q(\teta,r)= \pa{\teta + 2\pi\alpha + \frac{1- e^{-2\pi\eta}}{\eta}\,{r} + \eps f(\teta,r),r e^{-2\pi\eta} + \tau_\alpha + \eps g(\teta,r)}.
\end{equation}
Since $Q$ is twist and $\alpha$ is Diophantine, R\"ussmann's Theorem entails that for small enough $\eps$, there exists a circle $(\teta,\gamma(\teta))$ whose image by $Q$ is translated by a quantity $\lambda\in\R$, necessarily of the form $\lambda = \tau_\alpha + O(\eps)$, and on this circle the dynamics is conjugated to a Diophantine rotation $\teta\mapsto \teta + 2\pi\alpha$ by a diffeomorphism of $\T$ close to the identity.
 From this, we then remark that for any admissible perturbation and for values of $\eta \gtrdot \eps$,  there exists a curve  $\cC_\alpha$ $(\eps,\eta)\mapsto \nu(\eps,\eta)$ in the $(\eta,\nu)$-plane of parameters in correspondence of which the translation $\lambda$ vanishes, hence $Q$
admits an attractive Diophantine invariant circle of rotation $2\pi\alpha$ for those values of the parameters. This zero step allows to recover in this general context a geometry of the space of parameters similar also to the one that was given in \cite[Sec. 6]{Massetti:ETDS} in the special frame of Hamiltonian perturbations.

\begin{figure}[h]
\centering
\vskip-2mm
\includegraphics[width=.5\textwidth]{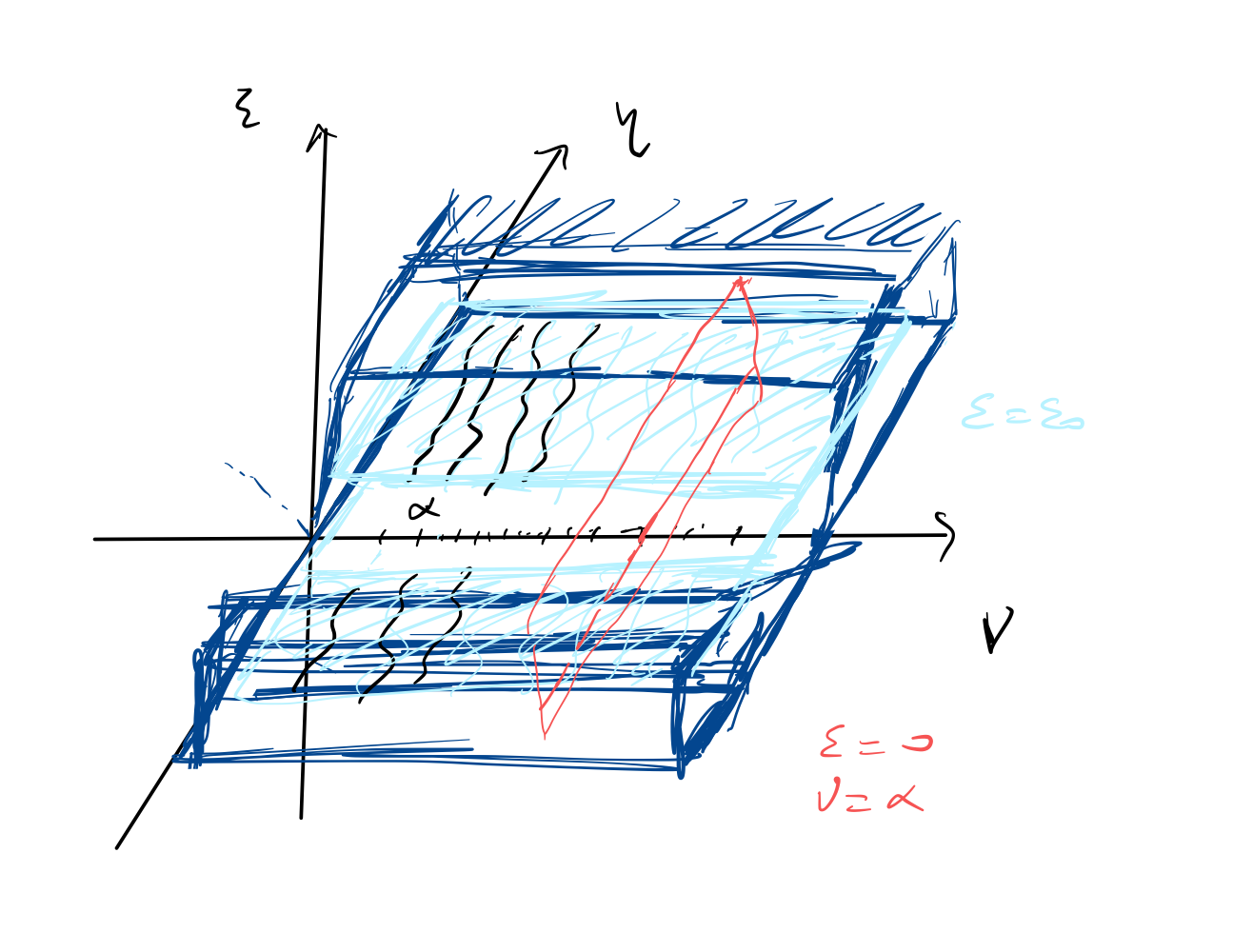}
\vskip-10mm
\caption{In light blue it is indicated the plane $(\eta,\nu)$ corresponding to a fixed admissible value $\eps_0$ of $\eps$,  while in black the Cantor of curves $C_\alpha$.}
\end{figure}

 In the present situation, the existence of such ``good curves" $\cC_\alpha$ follows from a straightforward application of the normal form result \cite[Theorem 5.4]{Massetti:APDE}, which is recalled in the Appendix, and the classical implicit function theorem, see Remark \ref{rmk parameters}  and Appendix \ref{appendix curves}. This is done in the very first part of Section \ref{second localization}. \\
$\bullet$ Because of the normal dynamics of these invariant circles, the diffeomorphism must possess an invariant circle also for values of the parameters belonging to a small neighborhood $\cV$ of each curve $\cC_\alpha$. With the aim of determining quantitatively such $\cV$, we then study what happens in the complementary of this set of curves of parameters, where the circle is a priori only translated. For this, we localize the diffeomorphism \emph{at} the translated circle of rotation $2\pi\alpha$, see Subsection \ref{second localization a 0}, and following the same strategy as Chenciner in \cite{Chenciner:1985} we use again the Diophantine properties of $\alpha$ in order to perform several variable's change that eventually put the diffeomorphism in a meaningful normal form (see Proposition \ref{NF}) up to\footnote{We denoted $\Theta, R$ the final variables of $Q$ after the changes} a certain order $O(|R|^k)$, where the perturbative terms are reduced in size and the only terms that may purely depend on the angles  vanish with the translation $\lambda$. This enables us to
determine quantitatively such a (cone) neighborhood $\mathcal V$ of the curves $\cC_\alpha$'s,  up to values of dissipation of the same order as the perturbation, thus enlarging the region that has been determined in Theorem \ref{theorem A}. In such a region the normal hyperbolicity is strong enough with respect to the remainders so that  the existence of an invariant normally hyperbolic circle by graph transform again is implied again. In this procedure, it is fundamental the fact that the proximity of the set $\mathcal C$, thanks to Russman's translated curve theorem, enables us to introduce local coordinates of the type (dissipation, translation) similar to the ones introduced by Chenciner. 
The following theorem holds.
%\begingroup
%\setcounter{tmp}{\value{theorem}}% store current value of theorem counter
%\setcounter{theorem}{1} %assign desired value to theorem counter
%\renewcommand\thetheorem{\Alph{theorem}}% locally redefine the representation of the theorem counter
\begin{theorem}\label{theorem NH2}
	\comment {If $\alpha$ is Diophantine, there exists a real analytic change of variables that transforms $Q$ into the normal form:
		\begin{equation}
			\eqsys{   Q(\Theta,R)= (\Theta',R')\\
				\Theta'= \Theta + 2\pi\alpha + \sum_{i=1}^k \bar\alpha_i R^i + O(\eps \abs{R}^{k+1}) + O(\abs{\lambda}\eps)\\
				R' = \lambda + \bar\beta_1\,R + \sum_{i=2}^k \bar\beta_i\, R^i + O(\eps\abs{R}^{k+1})  + O(\abs{\lambda}\eps),
			}   
			\label{NF}
		\end{equation}
		$\bar\alpha_i$ and $\bar\beta_i$ being constants, $O(\eps\abs{R}^{k+1})$ being terms of order $\geq k +1$ vanishing when $R = 0$, $O(\abs{\lambda}\eps)$ being terms of order $O(\tau\eps) + O(\eps^2)$, possibly depending on $\Theta$ but vanishing when $\lambda = 0$. \\}
	There exists an $\eps_0>0$ such that every Diophantine $\alpha$ identifies a region in the space of parameters $(\eps,\eta,\nu) = [0, \eps_0]\times \R \times \R$ defined by
	\begin{equation*}
		\eqsys{ \eta\gtrdot\eps \\
			\eta \geq \sqrt{2\pi}\abs{\nu - \alpha}, }
	\end{equation*}   
	where $Q$ possesses an invariant normally-attractive  $C^1$-circle, whose basin of attraction coincides with $\T\times\R$.
	
\end{theorem}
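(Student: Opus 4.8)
The plan is to bring $Q$, in a fixed neighbourhood of the translated circle of rotation $2\pi\alpha$, into the normal form of Proposition \ref{NF}, and then apply to that normal form the graph-transform machinery of Section \ref{section graph transf} exactly as in the proof of Theorem \ref{theorem A}. Fix once and for all a Diophantine $\alpha$ and pass to the coordinates of Subsection \ref{second localization a 0}, in which $Q$ is localized near $\set{R=0}$ and this circle is merely translated by $\tau_\alpha = 2\pi\eta(\nu-\alpha)$ up to $O(\eps)$. Since $Q$ is twist and $\alpha$ is Diophantine, R\"ussmann's translated curve Theorem \ref{Russmann theorem} furnishes, for every $\eps\leq\eps_0$ with $\eps_0$ small enough, a curve $(\Theta,\gamma(\Theta))$ close to $\set{R=0}$ whose $Q$-image is translated by a single constant $\lambda = \tau_\alpha + O(\eps)$, the induced dynamics being analytically conjugated to $\Theta\mapsto\Theta+2\pi\alpha$; in particular, as in Remark \ref{rmk parameters} and Appendix \ref{appendix curves}, the equation $\lambda(\eps,\eta,\nu)=0$ defines the good curve $\cC_\alpha$ along which the circle is genuinely invariant, and off $\cC_\alpha$ the number $\lambda$ measures precisely the obstruction.

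Next, localizing $Q$ \emph{at} the translated circle of rotation $2\pi\alpha$ and running the averaging scheme leading to Proposition \ref{NF}, the Diophantine condition on $\alpha$ allows one to remove, by finitely many analytic changes of variables close to the identity, all purely angle-dependent terms up to order $O(\eps\abs{R}^{k+1})$; the residual angle-dependent part is forced to be $O(\abs{\lambda}\eps)$, i.e.\ proportional to the translation, hence identically zero on $\cC_\alpha$. In the resulting coordinates $(\Theta,R)$ the map has the form \eqref{diffeo P} perturbed by: a constant normal forcing $\lambda$; angle-dependent terms of size $\lessdot\abs{\lambda}\eps$; and higher-order terms $O(\eps\abs{R}^{k+1})$. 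Its normal multiplier is $\bar\beta_1 = e^{-2\pi\eta}+O(\eps)$, so the contraction gap is $1-\bar\beta_1\sim 2\pi\eta$ as soon as $\eta\gtrdot\eps$.

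It remains to run the graph transform on this normal form. On the cone $\eta\geq\sqrt{2\pi}\,\abs{\nu-\alpha}$ one has $\abs{\lambda}\leq\abs{\tau_\alpha}+O(\eps)= 2\pi\eta\abs{\nu-\alpha}+O(\eps)\leq\sqrt{2\pi}\,\eta^2+O(\eps)$; solving heuristically $R=\lambda+\bar\beta_1 R+\cdots$, the fixed graph produced by the graph-transform map then sits at $\abs{R}=O(\abs{\lambda}/\eta)=O(\eta)$, well inside the fixed ball on which Proposition \ref{NF} is valid, and there the normal Jacobian equals $e^{-2\pi\eta}+O(\eps)+O(\eta^{k-1})<1$ once $\eps_0$ is small. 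Thus the compatibility conditions of Proposition \ref{condition GT1} — relating dissipation, effective perturbation and Lipschitz constant — are satisfied, the graph-transform map is a well-defined contraction on the space of Lipschitz graphs over $\T$, its unique fixed point is the sought $C^1$-circle, it is normally attractive because $\bar\beta_1<1$, and its basin is the whole cylinder for the same reason as in Theorem \ref{theorem A}: every orbit of $Q$ enters the chart (the normal direction being globally contracted up to an $O(\eps)$ error) and is then attracted to the invariant graph. Choosing $\eps_0$ small enough that R\"ussmann's theorem, the finitely many normal-form steps and the graph transform all hold uniformly over the cone completes the argument.

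The step I expect to be the main obstacle is the normal-form reduction: one must track carefully, along the successive conjugacies, that the purely angle-dependent remainders are genuinely of order $\abs{\lambda}\eps$ rather than merely $\eps$ — this is what ties the size of $\cV$ to the proximity of $\cC$ — while at the same time controlling the small-divisor losses of analyticity width coming from the Diophantine condition on $\alpha$, so that the effective perturbation in Proposition \ref{NF} remains small enough to be absorbed by the $O(\eta)$ hyperbolicity gap precisely under the threshold $\eta\geq\sqrt{2\pi}\,\abs{\nu-\alpha}$. Establishing that it is $\abs{\tau_\alpha}$ — and not a larger quantity — that governs the graph transform, and hence that $\sqrt{2\pi}$ is the sharp constant, is the delicate quantitative point.
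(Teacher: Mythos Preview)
Your proposal is correct and follows the same route as the paper: R\"ussmann localization, the normal form of Proposition~\ref{NF}, then a second application of the graph transform. The paper makes one step more explicit than you do: after reaching the normal form it first solves $R=\lambda+\sum_i\bar\beta_iR^i$ via the implicit function theorem for the invariant circle $R=R_0$ of the truncated angle-independent part, and then performs the shift $\tilde R=R-R_0$ \emph{before} invoking the graph transform. This recentering is what converts the constant forcing $\lambda$ (of size $O(\eta^2)+O(\eps)$, which by itself is too large to feed into Proposition~\ref{condition GT1} since it is comparable to $\eta k\leq\eta^2/6$) into residual perturbative terms of size $O(\eps|R_0|^2)+O(\eps|\lambda|)=O(\eps^2)+O(\eps\eta^2)$ that the $2\pi\eta$ contraction gap genuinely dominates. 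Your heuristic ``the fixed graph sits at $|R|=O(|\lambda|/\eta)$'' is exactly this $R_0$, so the argument coincides once the shift is made explicit.
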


\begin{figure}[h]
\centering
\vskip-2mm
\includegraphics[width=.4\textwidth]{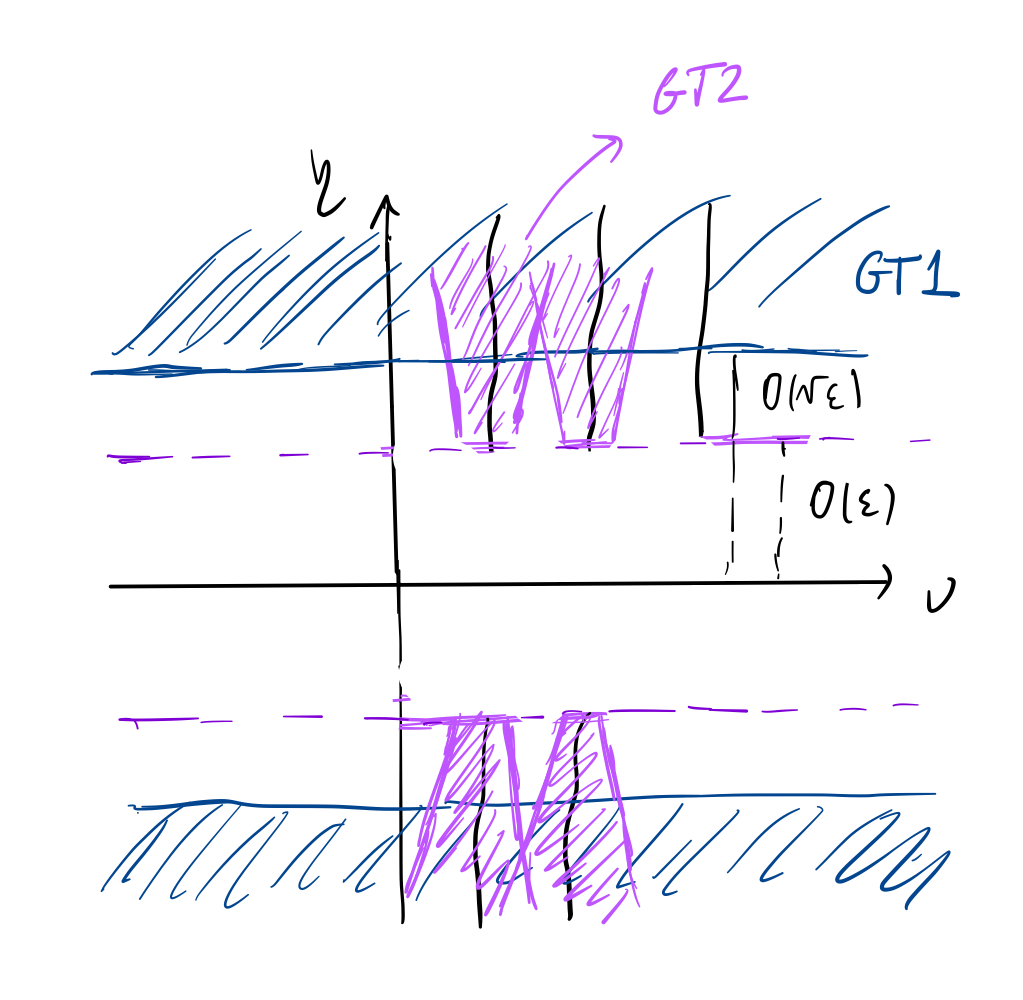}
\vskip-10mm
\caption{\footnotesize{Using the diophantine properties of $\alpha$, we reduce the size of remainders and determine regions where the graph transform works again, but for smaller values of $\eta$ in terms of $\eps$.} }\label{gt12}
\end{figure}

Some other drawings\footnote{whose beauty is far from being comparable to the one of Chenciner's drawings!} in the text portray the geometry of the good zones of hyperbolicity.
 We believe that the relevance of the approach proposed here stands in the fact that is very general,  and allows to handle generic non-conservative systems,  when no special symmetries helps in the study. Of course, the price to pay is a loss in sharpness that very special situations may favorite. Moreover, an extension to high dimension would be quite straightforward, thus giving space to the study of more general situations.

\section{Invariant circles through graph transform}
\label{section graph transf}

It is well known that invariant manifolds of dimension greater than $0$ systematically persist only if they are "normally hyperbolic" (see \cite{Hirsch-Pugh-Shub:1977} and \cite{Mane:1978}). Roughly speaking, if $P$ is a $C^1$-diffeomorphism of a smooth manifold $M$ and $V\subseteq M$ a submanifold invariant by $P$, $V$ is \emph{normally hyperbolic} if the tangent map $TP$ restricted to the normal direction to $V$ dominates the restriction of $TP$ to the tangent direction $TV$ \footnote{This property is extensively used in bifurcation theory (Hopf bifurcation, Denjoy attractors for example; see \cite{Chenciner-Iooss:1979} and references therein for instance).} \\ Hence, if the normal attraction is strong enough, Hadamard's "graph transform" method  allows to find the invariant perturbed manifold as the fixed point of a contraction in a well chosen functional space. See \cite{Shub:1987}. \smallskip\\ 

%Corollary \ref{corollario curve} guarantees the existence, in a plane $\eps= const.$ in the space $(\eta,\nu,\eps)$, of a Cantor set of curves $$C_\alpha := b(\nu(\eta,\alpha,\eps) = 0$$ along which the invariant attractive torus with Diophantine frequency $\alpha$ persists under perturbation.  We recall that all along $C_\alpha$ the perturbed vector field $v$ can be written in the form $$v = (\alpha + O(\tilde{r}), -\eta \tilde{r} + O(\tilde{r}^2)),$$ showing that $\eta = 0$ is the only value of transition between the attractive and the repulsive regime of the invariant torus. 

\label{Normally hyperbolic regime}
There are many equivalent definitions of normal hyperbolicity (see \cite{Hirsch-Pugh-Shub:1977, Shub:1987, Berger-Bounemoura}), for convenience of the reader we recall the strongest one, given by Hirsh-Pugh-Shub in \cite{Hirsch-Pugh-Shub:1977}.  Let $V$ be a smooth compact submanifold of a smooth manifold M and suppose that $P: M\to M$ is a $C^r$ diffeomorphism and $P(V) = V$. \\ We say that $P$ is \emph{$r$-normally hyperbolic} at $V$ (or that $V$ is $r$-normally hyperbolic) if the tangent bundle of $M$ over $V$ has a $TP$-invariant splitting 
\begin{equation}
	TM_{|V} = TV \oplus N^s \oplus N^u,
	\label{splitting}
\end{equation}
and for all $x\in V$, 
\begin{equation} 
	\eqsys{\sup_x \norm{TP_{|N_x^s}} < \inf_x m(TP_{|T_x V})^k,\\
		\\
		\inf_x m(TP_{|N_x^u}) > \sup_x \norm{TP_{|T_x V}}^k\,\qquad 1\leq k \leq r. }
	\label{domination}
\end{equation}
where $m(\cdot )$ is the minimum norm.\footnote{The minimum norm of a linear transformation $A$ is defined as $m(A)= \inf_{\abs{y}=1} \abs{Ay}$.}\\  Condition \eqref{domination} expresses domination of the normal behavior over the tangent behavior of $TP$.\\
In the case of our interest this definition becomes trivial. In fact, for the real analytic diffeomorphism $P$ of $\T\times\R$ defined by \eqref{diffeo P},  the circle $C_0 = \T\times \set{\rho=0}$ is clearly $r$-normally hyperbolic; the tangent bundle being trivial the given definition hides no ambiguity and the invariant splitting \eqref{splitting} is reduced to the sum of the tangent bundle $TC_0$ and a normal attractive one (resp. repulsive when $\eta<0$). \\
For simplicity, in the following we shall consider the case $r=1$. 
 Let now consider the perturbation
\begin{equation} \label{Q perturbato}
 Q(\teta,\rho)= (\teta + 2\pi\nu + \frac{1-e^{-2\pi\eta}}{\eta}\rho + \eps f(\teta,\rho), \rho e^{-2\pi\eta} + \eps g(\teta,\rho)),
 \end{equation}
   where $f$ and $g$ are bounded real analytic functions in their arguments, of uniformly bounded $C^2$ norms. Note that we are interested in small values of $\eta \ll 1$, hence the twist term $ \frac{1-e^{-2\pi\eta}}{\eta}$ and the contraction coefficient $e^{-2\pi\eta}$ are $O(1)$.
\begin{remark}There exists a positive constant $C$ such that $\rho' - \rho < 0$ outside the annulus $\abs{\rho}\leq C \eps/\eta$; the normal behaviors of $Q$ and $P$ are hence analogue. It remains to refine the study inside of it and conclude the existence of an invariant circle.
\end{remark}

\textbf{Proof of Theorem \ref{theorem A}}
%\begin{theorem}[Existence of an invariant circle for $Q$]
%	If $\eta\gg\sqrt{\eps}$, the map $Q$ possesses a unique invariant $C^1$-circle in the vicinity of $C_0 = \T\times \set{\rho=0}$, whose basin of attraction coincides with $\T\times\R$.
%	\label{theorem NH1} 
%\end{theorem}
The proof is classical, but we shall detail it and articulate it in several steps. \\ Since $\rho=0$ is the only invariant circle of $P$, we will find a unique invariant circle of $Q$ as the fixed point of a graph transform on an appropriate functional space; the dissipation makes the graph transform a contraction.\footnote{In this context we could take the term "normal hyperbolic" as the synonym of the contraction property of the graph transform associated to $Q$.}

\textit{Step 1: Definition of the graph transform.} Let us take the compact $\T\times [-1,1]$ centered at $\rho=0$.  We shall identify $2\pi$-periodic Lipschitz maps $\varphi : \R\to [-1,1]$ with $\Lip \varphi\leq k$, to Lipschitz maps $\varphi : \T\to [-1,1],\, \teta\mapsto \varphi(\teta)$. We will call $\Lip_{k}$ the set of Lipschitz functions with Lipschitz constant less than or equal to $k$, and endow it with the $C^0$-metric. \\Let $\Gr\varphi =\set{(\teta,\varphi(\teta))\in \T\times [-1,1]}$ be the graph of $\varphi$. Since $Q$ is defined everywhere, the composition $Q(\Gr\varphi)$ makes sense. \\ We note $Q(\teta,r)=(\Theta, R)$. The components of $Q(\teta,\varphi(\teta))$ are: 
\begin{align*}
	\Theta\circ(\id,\varphi)(\teta) &= \teta + 2\pi\nu + \frac{1-e^{-2\pi\eta}}{\eta}\varphi(\teta) + \eps f(\teta,\varphi(\teta))\\
	R\circ(\id,\varphi)(\teta) &= \varphi(\teta)e^{-2\pi\eta} + \eps g(\teta,\varphi(\teta)).
\end{align*}
We define as usual the \emph{graph transform} $\Gamma: \varphi\mapsto \Gamma\varphi$ by
\begin{equation}
	\Gamma\varphi : \teta\mapsto R\circ (\id,\varphi)\circ [\Theta\circ(\id,\varphi)]^{-1} (\teta).
\end{equation}
The graph of $\Gamma\varphi$ is the image by $Q$ of the graph of $\varphi$: $Q(\Gr\varphi)=\Gr(\Gamma\varphi)$ (see figure below); it is a classical tool for proving the existence of invariant attracting objects.

\smallskip

We look for a class of Lipschitz functions $\Lip_k$ such that $\Gamma$ defines a contraction of $\Lip_k$ in the $C^0$-metric. \\Although we are interested in small values of $k>0$ ($\eps$ being small, we do not expect the invariant curve to be in a class of functions with big variations) we will need $k$ as well as $\eta$ to be larger than $\eps$.\\ We shall realize this for $1\gg\eta,k,\eps$, since if $\eta$ is in the vicinity of $1$, the persistence of the invariant circle would be very easily shown. 
\begin{remark} The graph transform method provides an invariant circle which a priori is no more than Lipschitz (in the sense that it will be the graph of a Lipschitz map of Lipschitz constant $\leq k$) but, as it is well known this invariant circle is proved to be automatically at least $C^1$. We send the reader to \cite[Sec. 4]{Hirsch-Pugh-Shub:1977} for a  proof of this fact. Moreover, when $P$ is $r$-normally hyperbolic and $Q$ is in a $C^r$-neighborhood of $P$, the invariant circle of $Q$ is proved to be $C^r$ (see again \cite[ 2-3]{Hirsch-Pugh-Shub:1977} or \cite[Corollary 2.2]{Berger-Bounemoura}).\end{remark}
\begin{figure}[h]
\begin{picture}(0,0)%
\includegraphics{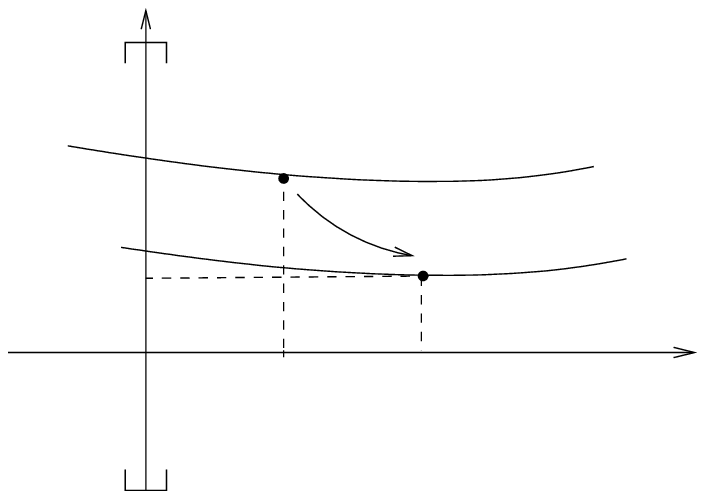}%
\end{picture}%
\setlength{\unitlength}{2901sp}%
\begingroup\makeatletter\ifx\SetFigFont\undefined%
\gdef\SetFigFont#1#2#3#4#5{%
	  \reset@font\fontsize{#1}{#2pt}%
	  \fontfamily{#3}\fontseries{#4}\fontshape{#5}%
	  \selectfont}%
\fi\endgroup%
\begin{picture}(4755,3423)(2551,-5080)
\put(5281,-4486){\makebox(0,0)[lb]{\smash{{\SetFigFont{8}{9.6}{\familydefault}{\mddefault}{\updefault}{\color[rgb]{0,0,0}$\Theta(\theta,\varphi(\theta))$}%
				}}}}
\put(4321,-4441){\makebox(0,0)[lb]{\smash{{\SetFigFont{8}{9.6}{\familydefault}{\mddefault}{\updefault}{\color[rgb]{0,0,0}$\theta$}%
				}}}}
\put(6661,-2596){\makebox(0,0)[lb]{\smash{{\SetFigFont{8}{9.6}{\familydefault}{\mddefault}{\updefault}{\color[rgb]{0,0,0}$(\theta,\varphi(\theta))$}%
				}}}}
\put(6946,-3586){\makebox(0,0)[lb]{\smash{{\SetFigFont{8}{9.6}{\familydefault}{\mddefault}{\updefault}{\color[rgb]{0,0,0}$(\theta,\Gamma\varphi(\theta))$}%
				}}}}
\put(2566,-3661){\makebox(0,0)[lb]{\smash{{\SetFigFont{8}{9.6}{\familydefault}{\mddefault}{\updefault}{\color[rgb]{0,0,0}$R(\theta,\varphi(\theta))$}%
				}}}}
\put(5086,-3256){\makebox(0,0)[lb]{\smash{{\SetFigFont{8}{9.6}{\familydefault}{\mddefault}{\updefault}{\color[rgb]{0,0,0}$Q$}%
				}}}}
\put(7291,-4111){\makebox(0,0)[lb]{\smash{{\SetFigFont{8}{9.6}{\familydefault}{\mddefault}{\updefault}{\color[rgb]{0,0,0}$\mathbb{T}$}%
				}}}}
\put(3376,-1816){\makebox(0,0)[lb]{\smash{{\SetFigFont{8}{9.6}{\familydefault}{\mddefault}{\updefault}{\color[rgb]{0,0,0}$\mathbb{R}$}%
				}}}}
\put(3106,-2176){\makebox(0,0)[lb]{\smash{{\SetFigFont{8}{9.6}{\familydefault}{\mddefault}{\updefault}{\color[rgb]{0,0,0}$1$}%
				}}}}
\put(3061,-5011){\makebox(0,0)[lb]{\smash{{\SetFigFont{8}{9.6}{\familydefault}{\mddefault}{\updefault}{\color[rgb]{0,0,0}$-1$}%
				}}}}
\end{picture}%
\caption{How the graph transform acts}
\end{figure}

\textit{Step 2: Well-definition of $\Gamma_\varphi$ as a map on $\Lip_k$}. The following Lemma is classical and the proof left in the Appendix. The well-definition of the graph transform will be subordinated to an appropriate relation among the paramenters.
%First of all we have to guarantee the invertibility of $\Theta\circ (\id,\varphi) = \id + u$.
\begin{lemma}	\label{inversion lemma} The following holds.
\begin{enumerate}
	\item For every positive $\eta$, provided $\eps$ is sufficiently small, $\Theta\circ (\id,\varphi)$ is invertible.
	\item The functions $\Theta$ and $R$ are Lipschitz on $\T\times [-1,1]$.
	\end{enumerate}
\end{lemma}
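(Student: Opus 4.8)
The plan is to handle the two assertions separately, since both come down to elementary Lipschitz estimates once the relevant quantities are isolated; the only coefficient requiring a uniform bound in $\eta$ is the twist coefficient $\frac{1-e^{-2\pi\eta}}{\eta}$, and the elementary inequality $1-e^{-x}\le x$ gives $\frac{1-e^{-2\pi\eta}}{\eta}\le 2\pi$ for every $\eta>0$ (while $e^{-2\pi\eta}\le 1$). This is exactly what will let the conclusions hold for \emph{every} positive $\eta$, with a smallness requirement only on $\eps$ (and a fixed small choice of $k$).

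For part (2) I would write $\Theta(\teta,\rho)=\teta+2\pi\nu+\frac{1-e^{-2\pi\eta}}{\eta}\rho+\eps f(\teta,\rho)$ and $R(\teta,\rho)=e^{-2\pi\eta}\rho+\eps g(\teta,\rho)$, observe that each is the sum of an affine function of $(\teta,\rho)$ and $\eps$ times a real analytic map; the latter, having uniformly bounded $C^2$-norm, is in particular Lipschitz on the compact $\T\times[-1,1]$ with constant controlled by its $C^1$-norm. Combining this with the bounds on the twist and contraction coefficients yields explicit Lipschitz constants for $\Theta$ and $R$ of the form $\max(1,2\pi)+\eps C$, uniformly in $\eta>0$.

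For part (1), set $\psi_\varphi(\teta):=\frac{1-e^{-2\pi\eta}}{\eta}\varphi(\teta)+\eps f(\teta,\varphi(\teta))$, a $2\pi$-periodic function, so that $\Theta\circ(\id,\varphi)(\teta)=\teta+2\pi\nu+\psi_\varphi(\teta)$. The decisive estimate is $\Lip(\psi_\varphi)\le \frac{1-e^{-2\pi\eta}}{\eta}\Lip\varphi+\eps\|f\|_{C^1}(1+\Lip\varphi)\le 2\pi k+\eps\|f\|_{C^1}(1+k)$ for $\varphi\in\Lip_k$. I would then fix the functional class $\Lip_k$ with $k$ small enough that $2\pi k\le\tfrac12$ (compatible with the compatibility conditions used later, cf.\ Proposition~\ref{condition GT1}) and then take $\eps$ small enough that $\eps\|f\|_{C^1}(1+k)<\tfrac12$; this forces $\Lip(\psi_\varphi)<1$, uniformly in $\eta>0$ and in $\varphi\in\Lip_k$. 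Finally I would invoke the elementary fact that a map $G(\teta)=\teta+\mathrm{const}+\psi(\teta)$ of $\R$ with $\Lip(\psi)=L<1$ is a bi-Lipschitz homeomorphism of $\R$: from $G(\teta_1)-G(\teta_2)=(\teta_1-\teta_2)+(\psi(\teta_1)-\psi(\teta_2))$ one reads off $(1-L)\abs{\teta_1-\teta_2}\le\abs{G(\teta_1)-G(\teta_2)}\le(1+L)\abs{\teta_1-\teta_2}$, giving injectivity and a Lipschitz inverse on the image, while continuity together with $G(\teta)\to\pm\infty$ as $\teta\to\pm\infty$ gives surjectivity; since $G=\Theta\circ(\id,\varphi)$ commutes with $\teta\mapsto\teta+2\pi$ (because $\psi_\varphi$ is $2\pi$-periodic), it descends to a bi-Lipschitz homeomorphism of $\T$, hence is invertible.

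The only genuinely delicate point — and it is mild — is that $\varphi$ is merely Lipschitz, so $\Theta\circ(\id,\varphi)$ need not be $C^1$ and the inverse function theorem is unavailable; this is why the monotonicity / bi-Lipschitz argument above replaces a differential one. Everything else is bookkeeping, the one substantive observation being the uniform bound $\frac{1-e^{-2\pi\eta}}{\eta}\le 2\pi$.
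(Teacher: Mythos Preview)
Your proposal is correct and follows essentially the same approach as the paper: write $\Theta\circ(\id,\varphi)=\id+u$ with $u$ a contraction (your $\psi_\varphi$ differs from the paper's $u$ only by the additive constant $2\pi\nu$), bound $\Lip u\le\frac{1-e^{-2\pi\eta}}{\eta}k+\eps A_f(1+k)<1$ using $k,\eps\ll1$, and conclude invertibility with $\Lip(\id+u)^{-1}\le(1-\Lip u)^{-1}$; for part~(2) both you and the paper simply read off the Lipschitz constants from the explicit form of $Q$. Your write-up is slightly more explicit in invoking $\frac{1-e^{-2\pi\eta}}{\eta}\le2\pi$ and in spelling out the bi-Lipschitz/surjectivity argument on $\R$ and its descent to $\T$, but there is no substantive difference in strategy.
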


\comment{ \begin{lemma}
		The graph transform $\Gamma$ is well defined from $\Lip_k$ to itself, provided $k$ and $\eta$ satisfy $\eps/\eta< k \ll\eta\ll1$.
	\end{lemma}
	
	\begin{proof}
		From the definition of the graph transform and the previous lemmata, we have
		\begin{align*}
			\abs{\Gamma\varphi(\teta_1) - \Gamma\varphi(\teta_2)}&\leq \frac{\Lip R\circ(\id,\varphi)}{1-\Lip u}\abs{\teta_1 - \teta_2}\\
			&\leq \frac{ke^{-2\pi\eta} + \eps A_g(1+k)}{1 - \sq{\frac{1-e^{-2\pi\eta}}{\eta}\,k +\eps A_f(1 + k)}}\abs{\teta_1 - \teta_2}.
		\end{align*}
		We want to find conditions on $\eta$ and $k$, such that $\eps\ll1$ being fixed, $\Gamma$ is well defined in $\Lip_k$; we must satisfy 
		\begin{equation*}
			ke^{-2\pi\eta} + \eps A_g(1+k) \leq k\set{1 - \sq{\frac{1-e^{-2\pi\eta}}{\eta}\,k +\eps A_f(1 + k)}},
		\end{equation*} 
		hence 
		\begin{equation*}
			k\set{1 - e^{-2\pi\eta} - \sq{\frac{1-e^{-2\pi\eta}}{\eta}\,k +\eps A_f(1 + k)}}\geq \eps A_g (1 + k).
		\end{equation*}
		It suffices to choose $k$ so that\footnote{Note that, despite one could expect $\eta$ to be of order $\eps$, in this case the $O(1)$-size of the twist $\frac{1-e^{-2\pi\eta}}{\eta}$ forces $k$, hence $\eta$, to be strong enough to satisfy the inequality.} 
		\begin{equation}
			1\gg\eta \gg k \quad \text{with}\quad k>\frac{\eps}{\eta}.
			\label{condizione k}
		\end{equation}
		Clearly the larger $\eta$ is, the easier it is to realize the inequality.
	\end{proof}
	
}

The following Proposition give the compatibility conditions on the parameters, for the well definition of the graph transform.
\begin{proposition}	\label{condition GT1}
	Let $0 < \eta,k, \eps \ll 1$. The graph transform $\Gamma$ is well defined from $\Lip_k$ to itself, if the following relations between the perturbation $\eps, k$ and $\eta$ are satisfied 
	\begin{equation}
		\eps\leq\frac{\pi}{6A}\eta k,\quad k\leq \frac{1}{6} \eta,\quad  \eta\leq c\ll 1,
	\end{equation}
	where $A= A_f + A_g$, $A_f$ and $A_g$ being the sup norm on the derivatives of $f,g$, and $c$ is a positive constant.
\end{proposition}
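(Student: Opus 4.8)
The plan is to check, for parameters in the stated range, the three facts that together amount to $\Gamma(\Lip_k)\subseteq\Lip_k$: that $\Theta\circ(\id,\varphi)$ is an invertible circle map (so $\Gamma\varphi$ makes sense), that $\Lip(\Gamma\varphi)\leq k$, and that $\Gamma\varphi$ takes values in $[-1,1]$. First I would write $\Theta\circ(\id,\varphi)(\teta)=\teta+u(\teta)$ with $u(\teta)=2\pi\nu+\frac{1-e^{-2\pi\eta}}{\eta}\varphi(\teta)+\eps f(\teta,\varphi(\teta))$, a $2\pi$-periodic function. Using $\varphi\in\Lip_k$, the mean value inequality for $f$ and $g$ (whose first derivatives are bounded by $A_f$, $A_g$), and the elementary estimates $0<\frac{1-e^{-2\pi\eta}}{\eta}\leq 2\pi$ and $e^{-2\pi\eta}<1$ valid for $\eta>0$, one gets
\begin{equation*}
	\Lip u\leq 2\pi k+\eps A_f(1+k),\qquad \Lip\big(R\circ(\id,\varphi)\big)\leq e^{-2\pi\eta}k+\eps A_g(1+k).
\end{equation*}
For $k\leq\eta/6$ and $\eps$ small these give $\Lip u<1$, so by Lemma \ref{inversion lemma} the map $\teta\mapsto\teta+u(\teta)$ is a bi-Lipschitz homeomorphism of $\T$ with $\Lip\big([\Theta\circ(\id,\varphi)]^{-1}\big)\leq(1-\Lip u)^{-1}$, hence $\Gamma\varphi=\big(R\circ(\id,\varphi)\big)\circ[\Theta\circ(\id,\varphi)]^{-1}$ is well defined and $\Lip(\Gamma\varphi)\leq\big(e^{-2\pi\eta}k+\eps A_g(1+k)\big)\big/\big(1-2\pi k-\eps A_f(1+k)\big)$.

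Next I would impose $\Lip(\Gamma\varphi)\leq k$, which after clearing denominators is equivalent to
\begin{equation*}
	k\Big((1-e^{-2\pi\eta})-2\pi k-\eps A_f(1+k)\Big)\geq\eps A_g(1+k).
\end{equation*}
The one input beyond bookkeeping is the elementary lower bound $1-e^{-2\pi\eta}\geq\pi\eta$, valid once $\eta\leq c$ with $c\leq 1/(2\pi)$. With $k\leq\eta/6$ the $O(1)$ twist contributes only $2\pi k\leq\pi\eta/3$; with $\eps\leq\frac{\pi}{6A}\eta k$, $A_f\leq A$ and $1+k\leq 3/2$ one gets $\eps A_f(1+k)\leq\frac{\pi}{4}\eta k\leq\frac{\pi}{24}\eta$, and likewise $\eps A_g(1+k)\leq\frac{\pi}{4}\eta k$. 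Plugging in, the left-hand side is $\geq k\big(\pi\eta-\tfrac{\pi}{3}\eta-\tfrac{\pi}{24}\eta\big)=\tfrac{5\pi}{8}\eta k$ while the right-hand side is $\leq\tfrac{\pi}{4}\eta k$, and $\tfrac58>\tfrac14$ closes it with margin; this is precisely why the constants $\tfrac16$ and $\tfrac{\pi}{6}$ appear in the statement. The same string of inequalities gives $\Lip u<1$ a posteriori, so Lemma \ref{inversion lemma} indeed applies.

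It remains to see that $\Gamma\varphi$ stays in $[-1,1]$. Since $\Gamma\varphi(\teta)$ is the $R$-component of $Q$ evaluated at some point of $\Gr\varphi\subseteq\T\times[-1,1]$, one has $\abs{\Gamma\varphi(\teta)}\leq e^{-2\pi\eta}+\eps\|g\|_{C^0}\leq 1$ as soon as $\eps\|g\|_{C^0}\leq 1-e^{-2\pi\eta}$, writing $\|g\|_{C^0}$ for the (finite) $C^0$-norm of $g$ on $\T\times[-1,1]$; since $1-e^{-2\pi\eta}\geq\pi\eta$ and $\eps\leq\frac{\pi}{6A}\eta k\leq\frac{\pi}{36A}\eta^2$, this holds once $c$ is also chosen $\leq 36A/\|g\|_{C^0}$. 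This is just the quantitative version of the Remark preceding the proof, namely that the normal component of $Q$ is strictly inward-pointing on $\partial(\T\times[-1,1])$. Together with Lemma \ref{inversion lemma} and the Lipschitz bound above, this gives $\Gamma(\Lip_k)\subseteq\Lip_k$.

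The only genuine obstacle is conceptual, not computational: because the twist coefficient $\frac{1-e^{-2\pi\eta}}{\eta}$ does not shrink as $\eta\to 0$, one cannot take $k$ merely small — it must be small relative to $\eta$, and $\eps$ small relative to $\eta k$. Arranging the hierarchy $\eps\ll\eta k\ll\eta^2\ll\eta\ll 1$ with explicit admissible constants is the substance of the proposition; everything else is the Lipschitz calculus of compositions and of inverses of near-identity circle maps.
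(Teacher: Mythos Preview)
Your proof is correct and follows essentially the same route as the paper: write $\Theta\circ(\id,\varphi)=\id+u$, invoke Lemma~\ref{inversion lemma} to bound $\Lip(\Gamma\varphi)$ by the quotient $\Lip(R\circ(\id,\varphi))/(1-\Lip u)$, clear denominators, and then use $1-e^{-2\pi\eta}\geq\pi\eta$ together with the hierarchy $\eps\lesssim\eta k$, $k\lesssim\eta$ to close the inequality. Your bookkeeping is in fact slightly cleaner than the paper's (you replace $\frac{1-e^{-2\pi\eta}}{\eta}$ by its upper bound $2\pi$ at the outset rather than Taylor-expanding), and you add the explicit check that $\Gamma\varphi$ stays in $[-1,1]$, which the paper's proof does not spell out.
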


\begin{proof}
	Let $\Theta\circ(\id,\varphi) =: \id + u$. From the definition of the graph transform and Lemma \ref{inversion lemma}, we have
	\begin{align*}
		\abs{\Gamma\varphi(\teta_1) - \Gamma\varphi(\teta_2)}&\leq \frac{\Lip\pa{ R\circ(\id,\varphi)}}{1-\Lip u}\abs{\teta_1 - \teta_2}\\
		&\leq \frac{ke^{-2\pi\eta} + \eps A_g(1+k)}{1 - \sq{\frac{1-e^{-2\pi\eta}}{\eta}\,k +\eps A_f(1 + k)}}\abs{\teta_1 - \teta_2}.
	\end{align*}
	In order to prove that $\Gamma\varphi\in\Lip_k$, we need to find conditions on $\eta,$ $k$ and $\eps$  such that is the following inequality is satisfied
	\begin{equation*}
		ke^{-2\pi\eta} + \eps A_g(1+k) \leq k\set{1 - \sq{\frac{1-e^{-2\pi\eta}}{\eta}\,k +\eps A_f(1 + k)}},
	\end{equation*} 
	hence 
	\begin{equation}
		k\set{1 - e^{-2\pi\eta} - \sq{\frac{1-e^{-2\pi\eta}}{\eta}\,k +\eps A_f(1 + k)}}\geq \eps A_g (1 + k).
		\label{disuguaglianza eta}
	\end{equation}
	Developing in $\eta$ (remember that $\eta \ll 1$) and dividing by $k$, we get
	\[2\pi\eta + O(\eta^2) + 2\pi^2\eta k + O(\eta^2)k \geq 2\pi\ k + \frac{\eps}{k} A_g + \eps A + \eps k A_f,\] where the constant $A = A_f + A_g$.
	It is immediate to see that  there exist a positive $c'$ such that if $\eta$ and $k$ satisfy 
	\begin{equation}
		1\gg\eta > c'k \quad \text{together with}\quad k>\frac{\eps}{\eta} \frac{A}{2\pi}.
		%\label{condizione k}
	\end{equation}
	the inequality would be easily satisfied. \\ Finally, since there exists a positive constant $C'$ such that $$2\pi\eta + O(\eta^2)\geq 2\pi\eta - C'\eta^2\geq \pi\eta$$ when $\eta$ is sufficiently small, the condition $1\gg \eta \geq 6\max\pa{k, \frac{\eps}{k}\frac{A}{\pi}}$ suffices.
	
	\comment{
		!!!! Non so se metterlo per esibire una scelta o meno !!!!!
		In particular, there exist two positive constants $\beta$ and $\gamma$ such that \[2\pi\eta - 2\pi^2\eta^2 + O(\eta^3) - 2\pi k  + 2\pi^2\eta k + O(\eta^2)k \geq 2\pi\eta - 2\pi^2\eta^2 - \gamma \eta^3 + (2\pi^2\eta - 2\pi - \beta\eta^2) k\]
		and, since $$2\pi\eta - 2\pi^2\eta^2 - \gamma\eta^3 \geq \frac{\eta}{2\pi}\quad \text{ and }\quad 2\pi^2\eta - \beta\eta^2\geq \pi\eta$$ if $\eta\leq \min\pa{\frac{-\pi^2 + \sqrt{\pi^4 + \gamma\frac{4\pi^2-1}{2\pi}}}{\gamma}, \frac{2\pi^2 - \pi}{\beta}}$, the right hand side satisfies
		\[ 2\pi\eta - 2\pi^2\eta^2 - \gamma \eta^3 + (2\pi^2\eta - 2\pi - \beta\eta^2) k\geq \frac{\eta}{2\pi} - \delta k,\] where $\delta$ is a positive constant $\pi\leq\delta\leq 2\pi$, if $\eta\leq 1$. The choice \[\eta = 6\pi \max\pa{\frac{\eps}{k} A, \eps A, 2\pi k}\] thus suffices.
	}
	Clearly the larger $\eta$ is, the easier it is to realize \eqref{disuguaglianza eta}.
\end{proof}

\begin{remark}[Necessarity of $\eta\geq C\sqrt{\eps}$]
	Condition \eqref{condition GT1} entails that $\eta$ is of order $O(\sqrt{\eps})$ at least. In this frame of general perturbations, where no cancellations occur a priori, we cannot improve this order. It will be necessary to use more refined normal form techniques, as we shall show in the next Section.\\
	In fact, if inequality \eqref{disuguaglianza eta} is satisfied, when $\eta\ll 1$ ($\eta\leq 1/2\pi$ would suffice), by expanding $e^{-2\pi\eta}$ and $\frac{1-e^{-2\pi\eta}}{\eta}$ the following chain holds
	\begin{align*}
		2\pi\eta - 2\pi^2\eta^2 + O(\eta^3) - 2\pi k + 2\pi^2 \eta k + O(\eta^2) k &\geq \frac{\eps}{k} A_g  + \eps A + \eps k A_f \\
		2\pi\eta - 2\pi^2\eta^2 + O(\eta^3) + O(\eta^2)k &\geq (2\pi - 2\pi^2\eta) k + \frac{\eps}{k} A_g + \eps A + \eps k A_f \geq \\ \geq \pi k + \frac{\eps}{k} A_g + \eps A + \eps k A_f.
	\end{align*}
	Moreover, since there exist two positive constants $\beta, \gamma$ such that $O(\eta^2)k \leq \beta\eta^2 k$ and $O(\eta^3)\leq \gamma \eta^3$, if $\eta\leq \sqrt{\frac{\pi}{2\beta}}$ we can write 
	\begin{equation*}
		2\pi\eta - 2\pi^2\eta^2 + \gamma\eta^3  \geq (\pi -\beta\eta^2)k + \frac{\eps}{k} A_g + \eps A + \eps k A_f \geq \frac{\pi}{2} k  + \frac{\eps}{k} A_g + \eps A + \eps k A_f.
	\end{equation*}
	Eventually note that if $\eta\leq \min \pa{\sqrt{\frac{\pi}{2\beta}}, \frac{2\pi^2}{\gamma}},$  $- 2\pi^2\eta^2 + \gamma\eta^3 $ is negative  and
	\[
	2\pi\eta \geq \frac{\pi}{2} k + \frac{\eps}{k} A_g + \eps A + \eps k A_f.\]
	Thus, in the regime $\eta\ll 1$ we are interested at, where in particular $\eta \leq \min \pa{\sqrt{\frac{\pi}{2\beta}}, \frac{2\pi^2}{\gamma}, \frac{1}{2\pi}}$, $\eta$ is necessarily of order at least $\sqrt{\eps}$.  
\end{remark}

The following technical lemma is the key for showing that the graph transform defines a contraction in the space $\Lip_k$.
\begin{lemma}
	\label{lemma per contrazione}
	Let $z=(\teta,\rho)$ be a point in $\T\times [-1,1]$ and let $\eta,k,\eps$ satisfy condition \eqref{condition GT1}. The following inequality holds for every $\varphi\in\Lip_k$: \[\abs{R(\teta,\rho) - \Gamma\varphi\circ\Theta(\teta,\rho)}\leq C \abs{\rho - \varphi(\teta)},\] $C$ being a constant smaller than $1$.
\end{lemma}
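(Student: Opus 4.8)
The plan is the classical graph-transform contraction argument, carried out fibrewise. First I would interpret the quantity to be bounded geometrically: since $\Gr(\Gamma\varphi)=Q(\Gr\varphi)$, the number $\abs{R(\teta,\rho)-\Gamma\varphi\circ\Theta(\teta,\rho)}$ is the vertical distance between $Q(z)$ and the point of $Q(\Gr\varphi)$ lying on the same vertical fibre. Concretely, by Lemma \ref{inversion lemma} the map $\Theta\circ(\id,\varphi)$ is an invertible self-map of $\T$, so I set $\teta_0:=\sq{\Theta\circ(\id,\varphi)}^{-1}\pa{\Theta(\teta,\rho)}$; then $\Theta(\teta_0,\varphi(\teta_0))=\Theta(\teta,\rho)$ and, by the definition of $\Gamma$, one has $\Gamma\varphi\circ\Theta(\teta,\rho)=R(\teta_0,\varphi(\teta_0))$. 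Hence the quantity to bound is $\abs{R(\teta,\rho)-R(\teta_0,\varphi(\teta_0))}$, and everything reduces to controlling $b:=\abs{\rho-\varphi(\teta_0)}$ and $d:=\abs{\teta-\teta_0}$ in terms of $a:=\abs{\rho-\varphi(\teta)}$.

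Next I would extract the horizontal estimate from $\Theta(\teta,\rho)=\Theta(\teta_0,\varphi(\teta_0))$, which rearranges (on the chosen lift) to
\[
\teta-\teta_0=\frac{1-e^{-2\pi\eta}}{\eta}\pa{\varphi(\teta_0)-\rho}+\eps\pa{f(\teta_0,\varphi(\teta_0))-f(\teta,\rho)}.
\]
Bounding the $f$-difference by $A_f(d+b)$, using $0<\frac{1-e^{-2\pi\eta}}{\eta}<2\pi$, and absorbing the term $\eps A_f d$ on the left yields $d\leq M\,b$ with $M:=\frac{2\pi+\eps A_f}{1-\eps A_f}$. Then the triangle inequality together with $\Lip\varphi\leq k$ gives $b\leq a+k\,d\leq a+kM\,b$, hence $b\leq a/(1-kM)$ and $d\leq M a/(1-kM)$, provided $kM<1$, which holds under \eqref{condition GT1}.

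For the normal direction I would use
\[
R(\teta,\rho)-R(\teta_0,\varphi(\teta_0))=e^{-2\pi\eta}\pa{\rho-\varphi(\teta_0)}+\eps\pa{g(\teta,\rho)-g(\teta_0,\varphi(\teta_0))},
\]
so that, bounding the $g$-difference by $A_g(d+b)$ and inserting the previous inequalities,
\[
\abs{R(\teta,\rho)-R(\teta_0,\varphi(\teta_0))}\leq\pa{e^{-2\pi\eta}+\eps A_g(M+1)}b\leq C\,a,\qquad C:=\frac{e^{-2\pi\eta}+\eps A_g(M+1)}{1-kM}.
\]
It then remains to check $C<1$, i.e. $1-e^{-2\pi\eta}>kM+\eps A_g(M+1)$: for $\eta\leq c\ll1$ one has $M\leq 3\pi$ and $1-e^{-2\pi\eta}\geq\pi\eta$; the bound $k\leq\eta/6$ gives $kM\leq\pi\eta/2$; and $\eps\leq\frac{\pi}{6A}\eta k$ forces $\eps A_g(M+1)=O(\eta k)$, negligible against $\pi\eta$. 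Summing, the right-hand side stays below $\pi\eta$, so $C<1$.

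The main obstacle is exactly this last bookkeeping together with the horizontal step: because the twist $\frac{1-e^{-2\pi\eta}}{\eta}$ is of size $O(1)$ and does not vanish as $\eta\to0$, the backward horizontal displacement $d$ is only \emph{comparable} to $b$, not smaller, so the contraction $C<1$ must be produced entirely by the competition between the normal rate $1-e^{-2\pi\eta}\sim 2\pi\eta$ and the error terms $kM\sim 2\pi k$ and $\eps A_g(M+1)$. This is precisely why the smallness budget $\eps\lessdot\eta k$, $k\lessdot\eta$, $\eta\lessdot 1$ of Proposition \ref{condition GT1} is the natural hypothesis, and also why — as the preceding remark anticipates — no relation weaker than $\eta\gtrdot\sqrt{\eps}$ can be reached by this method alone.
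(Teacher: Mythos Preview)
Your argument is correct, but it takes a different decomposition from the paper's. The paper splits at the point $(\teta,\varphi(\teta))$ on the graph (vertical projection in the \emph{source}): writing
\[
\abs{R(\teta,\rho)-\Gamma\varphi\circ\Theta(\teta,\rho)}\le\abs{R(\teta,\rho)-R(\teta,\varphi(\teta))}+\abs{\Gamma\varphi(\Theta(\teta,\varphi(\teta)))-\Gamma\varphi(\Theta(\teta,\rho))},
\]
it bounds the second term by $\Lip\Gamma\varphi\cdot(2\pi+\eps A_f)\abs{\rho-\varphi(\teta)}$ and then invokes the conclusion of Proposition~\ref{condition GT1}, namely $\Lip\Gamma\varphi\le k$, to obtain $C=e^{-2\pi\eta}+\eps A_g+2\pi k+\eps k A_f<1$ directly. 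You instead split at the point $(\teta_0,\varphi(\teta_0))$ that lands on the same \emph{image} fibre, and recover the horizontal control via the equation $\Theta(\teta,\rho)=\Theta(\teta_0,\varphi(\teta_0))$; this yields a different constant $C=(e^{-2\pi\eta}+\eps A_g(M+1))/(1-kM)$. The paper's route is shorter because it recycles the Lipschitz bound on $\Gamma\varphi$ already established in Proposition~\ref{condition GT1}; your route is more self-contained (it uses only the invertibility from Lemma~\ref{inversion lemma} and does not appeal to $\Lip\Gamma\varphi\le k$), at the price of the auxiliary $d\le Mb$, $b\le a/(1-kM)$ bookkeeping. Both land on the same qualitative condition $1-e^{-2\pi\eta}\gtrdot k+\eps$, so neither buys a sharper threshold.
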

\begin{proof}
	The following chain of inequalities holds:
	\begin{align*}
		\abs{R(\teta,\rho) - \Gamma\varphi\circ\Theta(\teta,\rho)} &\leq \abs{R(\teta,\rho) - R(\teta,\varphi(\teta))} + \abs{R(\teta,\varphi(\teta)) - \Gamma\varphi\circ\Theta(\teta,\rho)}\\
		&\leq \Lip R\,\abs{(\teta,\rho) - (\teta,\varphi(\teta))} + \Lip\Gamma\varphi\abs{\Theta(\teta,\varphi(\teta)) - \Theta(\teta,\rho)},
	\end{align*}
	from the definition of $\Gamma$. We observe that$$\abs{\Theta(\teta,\varphi(\teta)) - \Theta(\teta,\rho)}\leq \pa{\frac{1-e^{-2\pi\eta}}{\eta} + \eps A_f}\abs{\varphi(\teta) - \rho}\leq (2\pi + \eps A_f)\, \abs {\varphi(\teta) - \rho},$$ hence
	$$ \abs{R(\teta,\rho) - \Gamma\varphi\circ\Theta(\teta,\rho)}\leq \sq{\Lip R + \Lip\Gamma\varphi\,(2\pi + \eps A_f)} \abs{\varphi(\teta) - \rho},$$ and this chain of inequalities holds 
	\begin{align*}
		\Lip R + \Lip\Gamma\varphi\,(2\pi +\eps A_f) &\leq \Lip R + k(2\pi + \eps A_f) \\
		&\leq e^{-2\pi\eta} + \eps A_g + k2\pi + \eps k A_f\\
		& = 1 - 2\pi\eta + O(\eta^2) + k 2\pi + \eps A_g + \eps k A_f < 1.
	\end{align*}
\end{proof}

%\label{invariant curve Q}  
\textit{Step 3: Existence of the invariant curve.}
	We want to show that $\Gamma$ defines a contraction in the space $\Lip_{k}$: indeed $\Lip_k$ is a closed subspace of the Banach space $C^0(\T,[-1,1])$, hence complete. The standard fixed point theorem then applies once we show that $\Gamma$ is a contraction.\\ Let $z$ be a point of $\T$, for every $\varphi_1,\varphi_2$ in $\Lip_k$ we want to bound \[\abs{\Gamma\varphi_1(z) - \Gamma\varphi_2(z)}.\]
	The trick is to introduce the following point in $\T\times [-1,1]$, $$(\teta, \rho) = \pa{[\Theta\circ(\id,\varphi_1)]^{-1}(z), \varphi_1 ([\Theta\circ (\id,\varphi_1)]^{-1})(z)}$$ and remark the following equality
	\begin{align*}
		\Gamma\varphi_2\circ\Theta(\teta,\rho) &= \Gamma\varphi_2\pa{\Theta\pa{[\Theta\circ(\id,\varphi_1)]^{-1}(z)\,,\,\varphi_1 ([\Theta\circ (\id,\varphi_1)]^{-1})(z)}}\\
		&= R\circ (\id,\varphi_2)\circ [\Theta\circ(\id,\varphi_2)]^{-1}\circ [\Theta\circ(\id,\varphi_1)][\Theta\circ(\id,\varphi_1)]^{-1}(z)\\
		&= R\circ (\id,\varphi_2)\circ [\Theta\circ(\id,\varphi_2)]^{-1}(z) = \Gamma\varphi_2(z).
	\end{align*}
	
	We hence apply lemma \ref{lemma per contrazione} to $\varphi= \varphi_1$ at the point $(\teta,\rho)$ previously introduced. We have \[\abs{\Gamma \varphi_1(z) - \Gamma\varphi_2(z)} \leq C\, \abs{\varphi_1\circ [\Theta\circ (\id,\varphi_1)]^{-1}(z) - \varphi_2 \circ [\Theta\circ (\id,\varphi_1)]^{-1}(z)}.\]
	Taking the supremum for all $z$ and remembering that $C<1$, concludes the proof of the theorem.

\section{Second localization: Translated circle of rotation $2\pi\alpha$}
\label{second localization}
We now consider the part of the $(\eta,\nu)$-plane defined by $\abs{\eta}\ll1$ in which the graph transform does not work a priori and perform convenient coordinate's changes on $Q$ in order to write it in a normal form that will allow to conclude again the existence of an invariant circle via a second, straighforward, application of the graph transform. While in section \ref{Normally hyperbolic regime} we have localized our study at the circle $\rho=0$, we now want to focus on the one possessing a Diophantine rotation $2\pi\alpha$. In what follows, we make strong use of the ``translated curve theorem" of R\"ussmann that now we recall. 

Let $\alpha\in\R$ satisfy the following Diophantine condition for some $\gamma,\mathtt{q}>0$
\begin{equation}\label{diophantine}
	\abs{k\alpha - l}\geq \frac{\gamma}{\abs{k}^\mathtt{q}}\,\quad \forall (k,l)\in\mathbb{N}\setminus{\set{0}}\times\Z\,.
\end{equation}

\begin{theorem}[R\"ussmann]
	\label{Russmann theorem} Let $\alpha$ satisfy \eqref{diophantine}, $A\in\R$ and suppose that $P^0$ is of the form $$P^0 (\teta,r)=(\teta + 2\pi\alpha + t(r) + O(r^2), (1 + A)r + O(r^2))$$ where $t(0)=0$ and $t'(r)>0$.\\  If $Q'$ is close enough to $P^0$ there exists a unique real analytic curve $\gamma: \T \to \R$, close to $r=0$, a real analytic diffeomorphism $h$ of $\T$ isotopic to the identity and fixing the origin, and $\lambda\in\R$, close to $0$, such that \[Q' (\teta, \gamma(\teta)) = (\conj(\teta), \lambda + \gamma(\conj(\teta))).\]
\end{theorem}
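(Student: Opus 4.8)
The plan is to establish Theorem~\ref{Russmann theorem} by a quadratically convergent Newton (KAM) scheme in the analytic category, the classical route for statements carrying small divisors. First I would recast the conclusion as a single functional equation. Setting $v:=\gamma\circ h$, the curve, the conjugating circle diffeomorphism $h$ (isotopic to $\id$, close to it, normalised by $h(0)=0$) and the translation $\lambda$ must solve
\begin{equation*}
	Q'\bigl(h(\phi),v(\phi)\bigr)=\bigl(\,h(\phi+2\pi\alpha)\,,\ \lambda+v(\phi+2\pi\alpha)\,\bigr),\qquad \phi\in\T,
\end{equation*}
with $v:\T\to\R$ and $\lambda\in\R$ both close to $0$. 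Writing $Q'(\teta,r)=\bigl(\teta+2\pi\alpha+t(r)+O(r^2)+f(\teta,r),\,(1+A)r+O(r^2)+g(\teta,r)\bigr)$, where $f,g$ are small because they measure the distance of $Q'$ to $P^0$, this reads $\mathcal F(h,v,\lambda)=0$ with $\mathcal F(\id,0,0)$ of size $\sim\|f\|+\|g\|$ on some initial complex strip.

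Second, I would perform the Newton step. Given an approximate solution $(h,v,\lambda)$ on the strip $\T_s=\{\,|\Im\phi|<s\,\}$ with residual of size $\eps$, linearisation gives for the corrections $(\delta h,\delta v,\delta\lambda)$ a system of the schematic form
\begin{equation*}
	\delta h(\phi)-\delta h(\phi+2\pi\alpha)+a(\phi)\,\delta v(\phi)=\rho_1(\phi),\qquad
	(1+A)\,\delta v(\phi)-\delta v(\phi+2\pi\alpha)-\delta\lambda=\rho_2(\phi),
\end{equation*}
where $\|\rho_i\|\lesssim\eps$ and $a=\partial_r Q'_1\approx t'(0)>0$ near $r=0$ --- this is where the twist enters. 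Passing to Fourier coefficients in $\phi$: for $k\neq0$ the multipliers $1-e^{2\pi i k\alpha}$ and $(1+A)-e^{2\pi i k\alpha}$ have modulus $\gtrdot\gamma/|k|^{\mathtt q}$ by \eqref{diophantine}, so the oscillatory part of $\delta v$ (from the second equation), and then of $\delta h$ (from the first), are recovered with the usual R\"ussmann loss of analyticity width $\delta$ and gain $\delta^{-C}$; for $k=0$ the homological operators degenerate and the two zero modes become a $2\times2$ linear system for $(\delta v_0,\delta\lambda)$, solvable because $\langle a\rangle\approx t'(0)\neq0$, while $\delta h_0$ is fixed by $h(0)=0$. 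After checking that the updated $h$ remains a diffeomorphism and that $(h,v)$ stays in the domain of $Q'$ (the curve staying close to $r=0$), the new residual is quadratic, $\eps'\lesssim\delta^{-C}\eps^2$, on $\T_{s-\delta}$.

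Third, the iteration is closed in the standard way: picking $s_n\searrow s_\infty>0$ and $\delta_n,\eps_n$ with $\eps_{n+1}\le\delta_n^{-C}\eps_n^2\to0$ super-exponentially --- which is possible once $\|f\|+\|g\|$ is small enough on the initial strip, this being the quantitative meaning of ``$Q'$ close enough to $P^0$'' --- the sequence $(h_n,v_n,\lambda_n)$ converges to a triple real analytic on $\T_{s_\infty}$ solving the functional equation, and $\gamma:=v\circ h^{-1}$ is the asserted curve. Uniqueness in the stated class follows from the (bounded) invertibility of the linearised operator, once $\delta\lambda$ and the normalisation $h(0)=0$ absorb the one-dimensional indeterminacy: two such solutions differ by a quantity satisfying the homogeneous linearised equation with zero data, hence vanish.

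I expect the real work to be the bookkeeping of the Newton scheme rather than any isolated conceptual point: one must arrange the linearised system so that the twist governs the secular (zero-mode) part and the Diophantine condition the oscillatory part, balance the accumulated analyticity losses against the quadratic gain, and --- a feature proper to the non-conservative, translated setting --- track the fact that the scalar $\lambda$ is genuinely needed to absorb the one-dimensional cokernel, so that in general the curve is only translated, not invariant. I would also note that, alternatively, Theorem~\ref{Russmann theorem} can be deduced from the counterterm normal-form theorem recalled in the Appendix together with the implicit function theorem, by reading $\lambda$ as the residual radial counterterm.
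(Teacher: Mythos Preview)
Your outline is correct: the functional-equation reformulation, the linearised cohomological system with the twist $t'(0)\neq 0$ controlling the zero Fourier mode and the Diophantine condition \eqref{diophantine} the nonzero ones, the scalar $\lambda$ absorbing the one-dimensional radial cokernel, and the standard quadratic Newton iteration with shrinking analyticity strips, together constitute the classical direct proof. The paper, however, does not prove Theorem~\ref{Russmann theorem} in the text; it is quoted from the literature, and Remark~\ref{neuroni sedati} together with Appendix~\ref{Russmann theorem appendix} explain that in the cited source \cite{Massetti:APDE} the route is different in packaging: one first proves a general normal-form theorem for diffeomorphisms of $\T^n\times\R^m$ (recalled here as Theorem~\ref{Russmann general}) via an abstract inverse function theorem in analytic category --- the Newton scheme being hidden inside the latter --- and then reads off R\"ussmann's statement as the $n=m=1$ instance. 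The underlying mechanism is the same as yours; what the abstract route buys is modularity, the higher-dimensional generalisation at no extra cost, and, crucially for this paper, the smooth dependence of $(\gamma,h,\lambda)$ on external parameters needed in Remark~\ref{rmk parameters} and Appendix~\ref{appendix curves}. Your direct scheme is closer to R\"ussmann's original argument and makes the small-divisor arithmetic and the role of the twist fully explicit. You already flag this alternative deduction in your final sentence, so the two viewpoints are reconciled.
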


\begin{remark}\label{neuroni sedati}
Note that in \cite{Massetti:APDE} this theorem is proved in the analytic setting; real analytic diffeomorphisms in a neighborhood of $\T\times\set{0}$ admit a (unique) complex-extension and we work on the corresponding spaces of diffeomorphisms, endowed  with a Banach norm. The closeness conditions appearing
in the statement have thus to be understood as referred to that norm.\\
Note that, when $A\neq 0$, $\T\times \set{0}$ is a normally hyperbolic invariant circle of rotation $2\pi\alpha$ of $P^0$. In its original version the theorem is stated for $A=0$ in the context of finitely differentiable diffeomorphisms (see \cite{Russmann:1970} or the work of Herman \cite{Herman:1983} in the case of rotation numbers of constant type). For a proof in $C^\infty$ category  see\cite{Yoccoz:Bourbaki, Bost:1985} for example. In \cite{Massetti:APDE}, we treated in the analytic setting this (straightforward) case $A\in\R$, and recover this theorem as the particular case of a general normal form theorem (Theorem 5.4 of the same article) whose proof is based on a suitable inverse function theorem in analytic category \cite[Theorem A1]{Massetti:APDE}, proved by means of a Newton-like scheme which converges uniformly. As observed in the proof, in the case when $P^0$, $Q'$ depend smoothly on some parameters, the parametrized version of the inverse function theorem follows straightforwardly with the smoothness of $\gamma, h, \lambda$ w.r.t. the parameters. See subsection \ref{russ gen} of Appendix \ref{Russmann theorem appendix} where a statement of the general Theorem 5.4 (recalled here as Theorem \ref{Russmann general}) and a brief discussion is given. 
\end{remark}

\smallskip
%\subsection*{Elimination of parameters and invariant circle of rotation $2\pi\alpha$}    
We shall apply this theorem in our frame. Let $\alpha$ be Diophantine.
First, note that the translation function $\tau=2\pi\eta(\nu-\alpha)$ defines a family of hyperbolas in the $(\eta,\nu)$-plane (as opposed to the introduction, we dropped the sub-index $\alpha$ in the notation of $\tau$) and observe that the diffeomorphism $P$ in terms of $(\tau,\eta)$ becomes  
\begin{equation*}
	P(\teta,\rho)= (\teta + 2\pi\alpha + \frac{\tau}{\eta} + \frac{1-e^{-2\pi\eta}}{\eta}\,\rho ,\, \rho e^{-2\pi\eta})
\end{equation*}
which after the change of variables 
\begin{equation*}
	(\teta,\rho)\mapsto \pa{\teta, \rho - \frac{2\pi\eta (\nu - \alpha)}{e^{-2\pi\eta} - 1}= r }
\end{equation*}
reads 
\begin{equation}
\label{P russmann}
	P(\teta,r)=\pa{\teta + 2\pi\alpha + \frac{1- e^{-2\pi\eta}}{\eta}\,r \,,re^{-2\pi\eta} + \tau }\,.
\end{equation}

\noindent
If $\nu=\alpha$ then $r = 0$ is the invariant circle of rotation $2\pi\alpha$. 

%In these coordinates\footnote{By abuse of notation, we still denoted by $f$ and $g$ the perturbing terms of bounded $C^2$-norms, since in the regime $\eta\ll 1$ where we are the twist term is $O(1)$ and the analytic perturbations  after the coordinate's change remain $O(\eps)$.}, when $\eps = 0$, $Q$ reduces to 
%$$
% P(\teta,r)= (\teta + 2\pi\alpha + \frac{1-e^{-2\pi\eta}}{\eta}\,r,\, r e^{-2\pi\eta} + \tau)
%$$
% where if $\nu=\alpha$ then $r = 0$ is the invariant circle of rotation $2\pi\alpha$.\\
\noindent
The corresponding perturbation reads
\begin{equation}\label{Q russmann}
	Q(\teta,r)= \pa{\teta + 2\pi\alpha + \frac{1- e^{-2\pi\eta}}{\eta}\,r + \eps f(\teta,r),\,r e^{-2\pi\eta} + \tau + \eps g(\teta,r)}.
\end{equation}

%We first introduce R\"ussmann's theorem and show that to any $2\pi\alpha$ Diophantine, there corresponds a curve in the parameters' space such that for the corresponding values an invariant circle of rotation $\alpha$ exists. 

If\footnote{ Note that $P$ and $Q$ have a twist also for small values of $\eta$, $t'(r) = \frac{1-e^{-2\pi\eta}}{\eta}\to 2\pi$ when ${\eta\to 0}$ } $Q$ is close enough to $P$,  by R\"ussmann's theorem there exist a real analytic curve $\gamma:\T\to \R$, a diffeomorphism of the torus $h(\teta) = \teta + O(\eps)$ close to the identity and $\lambda\in\R$, close to $\tau$, such that
\begin{itemize}[leftmargin=*]
	\item the image of the curve $\tilde\rho=\gamma(\teta)$ via $Q$, is the translated curve of equation $r= \lambda + \gamma(\teta)$, where $\lambda = \lambda(\alpha,\nu,\eta,\eps) = \tau + \int_\T \gamma(\teta)d\teta = \tau + O(\eps)$
	\item the restriction of $Q$ to $\Gr\gamma$ is conjugated to the rotation $R_{2\pi\alpha}: \teta\mapsto \teta + 2\pi\alpha$. 
\end{itemize}

\begin{remark}[Curves of invariant diophantine circles]
\label{rmk parameters}
As direct consequence of classical implicit function theorem and the smooth dependence  of $\lambda$ on the parameters, one proves that there exists a positive constant $M$ such that whenever $\eta > M\eps$, there exists $\nu$ such that $\lambda(\alpha,\nu,\eps,\eta) = 0$ and the graph of $\gamma$ is then invariant, of rotation $2\pi\alpha$. The consequence is that for any $\alpha$ Diophantine and any admissible $\eps$ such that R\"ussmann's result applies, in the associated plane $(\nu,\eta)$ in the $(\eps,\eta,\nu)$-space, there corresponds a curve $\cC_\alpha$ $(\eps,\eta)\mapsto \nu(\eps,\eta)$ such that $\lambda(\alpha,\nu(\eps,\eta),\eps,\eta) = 0$.  Along these curves $\nu = \alpha + O(\eps)$ and the  circle of the corresponding diffeomorphism is quasi-periodic of rotation $2\pi\alpha$.  The perturbation $Q$ being quite general, there is a priori no reason why the existence of such curves is guaranteed for $\eta\to 0$, as it happens for the particular case of Hamiltonian perturbations \cite{Massetti:ETDS} (for the interested reader, we recall some of these results in Appendix \ref{Russmann theorem appendix}, where picture should be compared with the ones below).
\end{remark}

\begin{figure}[h]
\centering
\vskip-2mm
\includegraphics[width=.5\textwidth]{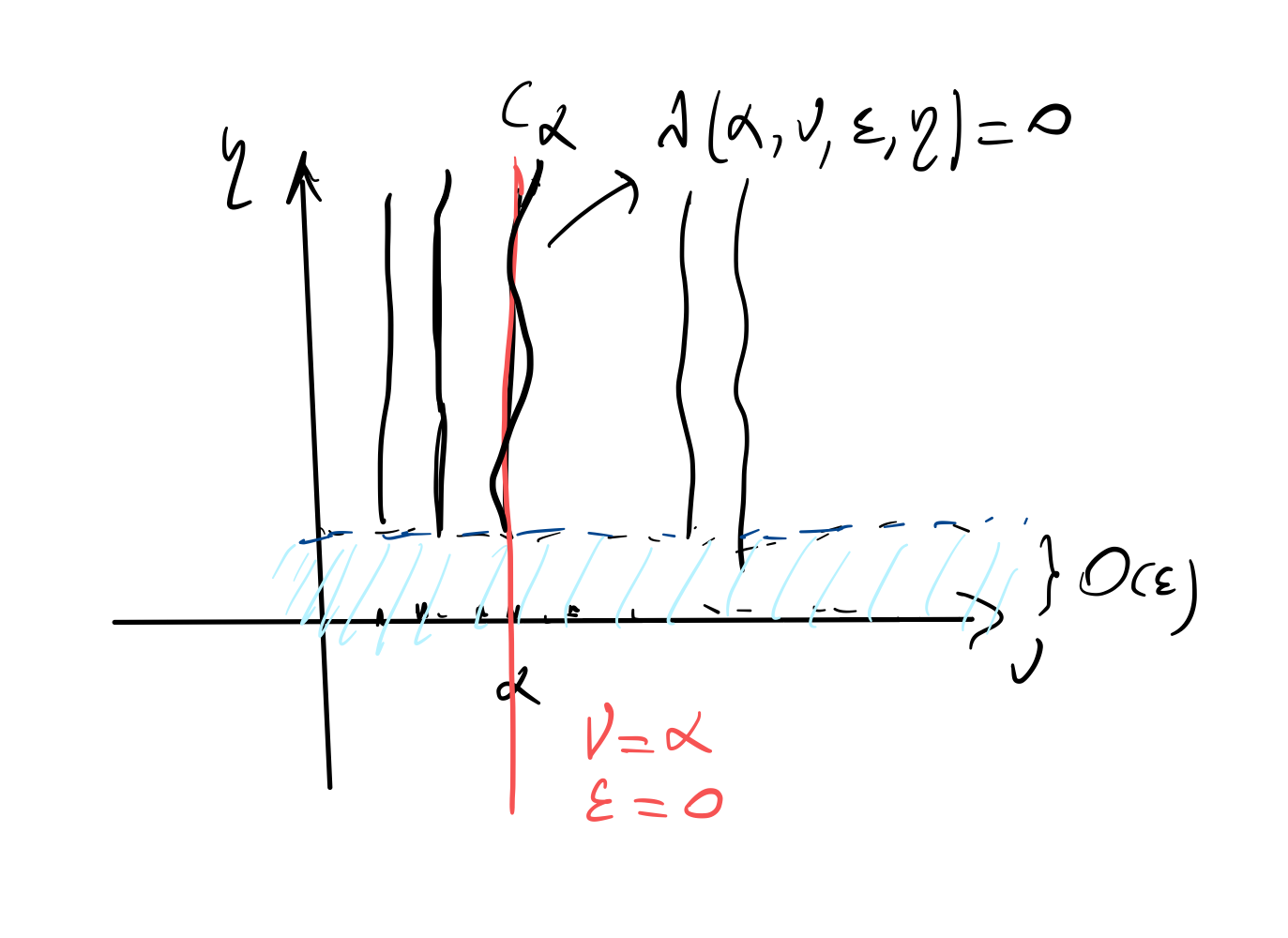}
\vskip-10mm
\caption{Fixing an admissible $\eps$, we see the Cantor set of curves $\cC_\alpha$ in the plane $(\nu,\eta)$. We restricted the picture to the positive plane $\eta>0$, the negative one being similar.}\label{Cantor curves}
\end{figure}

\subsection{Localization at the translated circle}\label{second localization a 0}
We shall now prove that, for parameters in the vicinity of $\cC_\alpha$, there still exists an invariant circle (the rotation of which will no more be controlled), for values of $\eta$ smaller than the ones given in theorem \ref{theorem A} ($1\gg\eta\gtrdot\sqrt{\eps}$).

 \smallskip

Let us first remark that there exists a positive constant $\tilde{C}$ such that outside the annulus $$A^{-}= \set{(\teta,r),\abs{r}\leq \frac{\tau}{1 - e^{-2\pi\eta} } + \tilde{C}\eps/\eta},$$ which contains the translated circle $r= 0$ of $P$,  we have $r' - r< 0$.\\ In order to conclude the existence of an invariant circle inside of it, we first localize at the translated circle of $Q$ close to $r = 0$ (whose existence is guaranteed by Theorem \ref{Russmann theorem}), and write $Q$ in a more refined form that allows to take advantage of the normal attraction also inside of $A^{-}$ so that the graph transform can be applied again. 

R\"ussmann's theorem guarantees the existence of $\eps_0>0$ such that for any $\eps\le\eps_0$,   the local diffeomorphism
\begin{equation*}
	G:(\teta,r)\mapsto \pa{h^{-1}(\teta)=\xi,\, r - \gamma(\teta)= x}, 
	\label{Russmann variables}
\end{equation*}
sends $r = \gamma(\theta)$ to $x = 0$ and is such that $G\circ Q\circ G^{-1}$ has $x=0$ as a translated curve on which the dynamics is the rotation of angle $2\pi\alpha$. 
We have:

\begin{equation*}
	\eqsys{
		Q(\xi,x)=(\xi',x')\\
		\xi'= h^{-1}\pa{h(\xi + 2\pi\alpha) + \frac{1-e^{-2\pi\eta}}{\eta}\,x + \eps\sum_{j=1} \frac{1}{j!}\frac{\partial^j f}{\partial r^k}(\teta,\gamma(\teta))\,x^j},\\
		x'= \lambda + xe^{-2\pi\eta} + \gamma (h(\xi + 2\pi\alpha)) - \gamma(h(\xi + 2\pi\alpha) + \frac{1-e^{-2\pi\eta}}{\eta}x + O(\abs{x})) + \\ + \eps\sum_{i=1} \frac{1}{i!} \frac{\partial^i g}{\partial r^i}(\teta,\gamma(\teta)) x^i,\\
	}
\end{equation*}
hence 
\begin{equation}
	Q(\xi,x) = (\xi + 2\pi\alpha + \sum_i A_i(\xi)x^{i},\, \lambda + \sum_i B_i(\xi)x^i),
	\label{Russmann normal form} 
\end{equation}
where 
\begin{itemize}[label=$\bullet$,leftmargin=*]
	\item $B_1(\teta) = e^{-2\pi\eta} - D\gamma(h(\xi + 2\pi\alpha))\cdot (\frac{1-e^{-2\pi\eta}}{\eta} + \eps\frac{\partial f}{\partial\tilde\rho}(\teta,\gamma(\teta))) + \eps \frac{\partial g}{\partial\tilde\rho}(\teta,\gamma(\teta)) $, hence it is of order $1 + O(\eps)$,
	\item $B_i(\teta)$, for $i>1$,  is the coefficient of the order-$i$ term in $x$ from the development of terms as $$-\frac{1}{i!} D^i\gamma(h(\xi + 2\pi\alpha))\cdot(\frac{1-e^{-2\pi\eta}}{\eta}\,x + \eps\sum_{j=1} \frac{1}{j!}\frac{\partial^j f}{\partial r^j}(\teta,\gamma(\teta))\,x^j )^i + \eps \frac{1}{i!}\frac{\partial^i g}{\partial r^i}(\teta,\gamma(\teta)), $$ and has order $O(\eps)$.
	\item $A_i(\teta)$ is the order-$i$ term coming from 
	\[\frac{1}{i!}D^i h^{-1}(h(\xi + 2\pi\alpha))\cdot (\frac{1-e^{-2\pi\eta}}{\eta}\,x + \eps\sum_{j=1} \frac{1}{j!}\frac{\partial^j f}{\partial r^j}(\teta,\gamma(\teta))\,x^j)^i.\]
\end{itemize}
In particular $A_i(\teta)$ is of order $1 + O(\eps)$ for $i=1$ and $O(\eps)$ otherwise, where we denoted $\teta= h(\xi)$ and omitted indexes indicating the smooth dependence on $\alpha,\eta$ and $\tau$. \\ This change of coordinates actually enables us to write $Q$ as the composition of a map $$\mathcal{I}_{\eta,\tau}=(\xi + 2\pi\alpha + \sum_i A_i(\xi)x^{i},\, \sum_i B_i(\xi)x^i),$$ leaving the circle $x=0$ invariant, with a translation $T_\lambda: (\teta,r)\mapsto (\teta, \lambda + r)$ in the normal direction.  Remark that when $\eps=0$, we have $h=\id$, $\gamma = 0$ and $\lambda = \tau$, thus $Q$ would read as before the perturbation; in addition even if we don't dispose of the explicit form of the translation function $\lambda$, the implicit function theorem tells us that $\lambda = \tau + O(\eps)$. 
\subsection{A normal form for $Q$ and Proof of Theorem \ref{theorem NH2}} If $\lambda\neq 0$, it seems impossible to write $Q$ in a form as gentle as $\mathcal{I}_{\eta,\tau}$. The idea is to use all the strength of the translation $\lambda$: we perform coordinates changes that push the dependence on the angles as far as possible, let say up to a certain order $k$, and eventually remark that \gr{all the dependence on the angles of the remaining terms will cancel out with $\lambda$}. The following result is a key step for the proof of Theorem \ref{theorem NH2}. 

\begin{proposition}
	\label{NF}
	 If $\alpha$ is Diophantine, there exists a real analytic change of variables that transforms $Q$ into the normal form:
	\begin{equation}
		\eqsys{   Q(\Theta,R)= (\Theta',R')\\
			\Theta'= \Theta + 2\pi\alpha + \sum_{i=1}^k \bar\alpha_i R^i + O(\eps \abs{R}^{k+1}) + O(\abs{\lambda}\eps)\\
			R' = \lambda + \bar\beta_1\,R + \sum_{i=2}^k \bar\beta_i\, R^i + O(\eps\abs{R}^{k+1})  + O(\abs{\lambda}\eps),
		}   
	\end{equation}
	$\bar\alpha_i$ and $\bar\beta_i$ being constants, $O(\eps\abs{R}^{k+1})$ being terms of order $\geq k +1$ in $R$, $O(\abs{\lambda}\eps)$ being terms of order $O(\tau\eps) + O(\eps^2)$, possibly depending on $\Theta$ but vanishing when $\lambda = 0$. 
	%\label{theorem NH2}
\end{proposition}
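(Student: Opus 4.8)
The plan is to reduce the diffeomorphism, starting from the form \eqref{Russmann normal form} obtained at the translated circle of rotation $2\pi\alpha$, by a finite sequence of real analytic changes of variables that successively eliminate the angular dependence of the lowest-order coefficients, using at each step that $\alpha$ is Diophantine so that the relevant cohomological (small-divisor) equation is solvable. Write $Q$ as in \eqref{Russmann normal form}, $Q(\xi,x) = (\xi + 2\pi\alpha + \sum_i A_i(\xi)x^i,\ \lambda + \sum_i B_i(\xi)x^i)$, where $A_1 = 1 + O(\eps)$, $B_1 = 1 + O(\eps)$ and $A_i, B_i = O(\eps)$ for $i\ge 2$. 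The first step is to average the linear coefficient $B_1(\xi)$ in the normal direction: seek a change $x = (1 + \eps c(\xi)) y$ (equivalently a fiberwise linear conjugacy) so that the new linear coefficient becomes the constant $\bar\beta_1 = \int_\T B_1 = e^{-2\pi\eta} + O(\eps)$; this requires solving $c(\xi + 2\pi\alpha) - c(\xi) = (\text{something})$, which has a unique analytic zero-average solution precisely because of \eqref{diophantine}, at the cost of a geometric loss of analyticity width — harmless since we only iterate finitely many ($k$) times. Simultaneously one averages $A_1(\xi)$ to a constant $\bar\alpha_1$ by a shear $\Theta = \xi + \eps a(\xi)$, again solvable by the Diophantine condition.

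The second step is the inductive elimination of higher-order terms. Assuming that after $j-1$ steps the map reads
\begin{equation*}
	\eqsys{\Theta' = \Theta + 2\pi\alpha + \sum_{i=1}^{j-1}\bar\alpha_i R^i + A_j^{(j)}(\Theta)R^j + O(|R|^{j+1}) + O(|\lambda|\eps)\\
	R' = \lambda + \sum_{i=1}^{j-1}\bar\beta_i R^i + B_j^{(j)}(\Theta)R^j + O(|R|^{j+1}) + O(|\lambda|\eps),}
\end{equation*}
one looks for a change of variables of the form $\Theta = \Theta_{\text{new}} + \eps u_j(\Theta_{\text{new}})R_{\text{new}}^j$, $R = R_{\text{new}} + \eps v_j(\Theta_{\text{new}})R_{\text{new}}^j$. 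Conjugating and matching the order-$j$ terms in $R$, the functions $u_j, v_j$ are determined by a pair of linear difference equations of the shape $\bar\beta_1^{\,j} w(\Theta + 2\pi\alpha) - w(\Theta) = (\text{known})$ (for $v_j$) and an analogous one for $u_j$; since $|\bar\beta_1| = e^{-2\pi\eta} < 1$ the equation for $v_j$ is in fact \emph{non-resonant even without small divisors} — it is a genuine contraction-type equation solvable by a convergent Neumann series with no loss of analyticity — whereas the one for $u_j$ is the usual cohomological equation, solvable thanks to \eqref{diophantine}. This kills the $\Theta$-dependence of the order-$j$ coefficients, replacing $A_j^{(j)}, B_j^{(j)}$ by their averages $\bar\alpha_j, \bar\beta_j$, while producing only new terms of order $\ge j+1$ in $R$ and new terms proportional to $\lambda$. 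Iterating up to $i = k$ and collecting the remainders gives the stated form, the point being that every coefficient we could \emph{not} make constant sits in front of a power $R^{k+1}$ or is carried by a factor that vanishes with $\lambda$.

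The crucial bookkeeping — and the place where one must be careful — is tracking where the purely angular terms (those independent of $R$) go. In \eqref{Russmann normal form} there is no $R$-independent term in the $\Theta$-equation and the only $R$-independent term in the normal equation is $\lambda$ itself; I must check that none of the subsequent substitutions creates a new $\Theta$-dependent term of order $0$ in $R$ that does not vanish with $\lambda$. This is where the structure inherited from R\"ussmann's theorem is essential: the map $\mathcal I_{\eta,\tau}$ (the $\lambda = 0$ part) genuinely preserves $x = 0$, so every $R$-independent contribution is proportional to $\lambda$; the changes of variables, being tangent to the identity and polynomial in $R$ with coefficients that are themselves either $O(\eps)$ or built from $\gamma, h$ (which vanish with $\eps$), transform $\lambda$-multiples into $\lambda$-multiples of size $O(\tau\eps) + O(\eps^2)$ and cannot manufacture a $\lambda$-free angular term. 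The estimate $\lambda = \tau + O(\eps)$ from Remark \ref{rmk parameters}, together with $\tau = 2\pi\eta(\nu-\alpha)$, then identifies the $O(|\lambda|\eps)$ remainder as announced. The main obstacle I anticipate is not any single cohomological equation — those are standard — but the uniform control of the finitely many analyticity losses and of the constants $\bar\alpha_i, \bar\beta_i$, $A$-type remainders etc. as functions of the parameters $(\alpha,\eta,\tau)$, so that the resulting normal form is valid on a fixed complex strip with $\eps_0$ independent of the particular Diophantine $\alpha$ (beyond its constants $\gamma, \mathtt q$); this is exactly the parametrized-estimates part, for which one invokes the Newton-scheme machinery of \cite{Massetti:APDE} as recalled in Remark \ref{neuroni sedati}.
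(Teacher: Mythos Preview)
Your strategy matches the paper's: starting from \eqref{Russmann normal form}, average the coefficients $B_1,\beta_2,\ldots,\beta_k$ and $\alpha_1,\ldots,\alpha_k$ one at a time by near-identity conjugations, each determined by a linear difference equation over the rotation $R_{2\pi\alpha}$. Two points deserve correction. First, a pure angular reparametrization $\Theta=\xi+\eps a(\xi)$ does \emph{not} average $A_1(\xi)$ --- it perturbs the constant rotation $\xi\mapsto\xi+2\pi\alpha$ itself and leaves the $x$-coefficient essentially untouched; the right change is $\xi\mapsto\xi+Z^{(1)}(\xi)\,x$, exactly as in your own inductive step with $j=1$, and this is what the paper does. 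Second, the paper normalizes $B_1$ \emph{multiplicatively}: it solves $\log B_1(\xi)+\log X(\xi)-\log X(\xi+2\pi\alpha)=\mathrm{const}$ and sets $y=x/X(\xi)$, so that $\bar\beta_1=\exp\frac{1}{2\pi}\int\log B_1$ (the geometric, not the arithmetic, mean); your linearized change $x=(1+\eps c)y$ makes the coefficient constant only modulo $O(\eps^2)$.

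More substantively: your observation that the $v_j$-equation (in the paper's form $\bar\beta_1^{\,j}X^{(j)}(\cdot+2\pi\alpha)-\bar\beta_1 X^{(j)}=\beta_j-\bar\beta_j$) can be inverted by a Neumann series since $|\bar\beta_1|<1$ is correct, but the resulting bound is $\|X^{(j)}\|\lesssim \|\beta_j\|/(\bar\beta_1-\bar\beta_1^{\,j})\sim \|\beta_j\|/\eta$, which blows up as $\eta\to 0$. The paper instead invokes Lemma~\ref{lemma cohomological circle} for \emph{all} these equations --- both the $X^{(i)}$ and the $Z^{(i)}$ --- using the Diophantine property of $\alpha$ to get $\|X^{(i)}\|_s\le C\gamma^{-1}\sigma^{-(\mathtt q+1)}\|\beta_i\|_{s+\sigma}$ uniformly in $\eta$. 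This uniformity is exactly what guarantees that the conjugations stay $O(\eps)$-close to the identity and that the remainders are genuinely $O(\eps|R|^{k+1})$ and $O(|\lambda|\eps)$ with constants independent of $\eta$; that is precisely what Theorem~\ref{theorem NH2} exploits in the regime $\eta\sim\eps$. Your Neumann-series shortcut therefore proves the Proposition for each fixed $\eta>0$, but would not deliver the normal form with the quantitative remainders needed downstream.
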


\begin{proof}

In order to write $Q$ in a normal form like \eqref{NF}, we do an extensive use of the Diophantine property of $\alpha$ and we repeatedly apply lemma \ref{lemma cohomological circle}.\\
Using the fact that $B_1(\xi)$ is close to $1$, we see that the difference equation 
\begin{equation}
	\log B_1(\xi) + \log X(\xi) - \log X(\xi + 2\pi\alpha) = \frac{1}{2\pi}\int_0^{2\pi}{\log B_1(\xi)}\,d\xi
	\label{difference equation}
\end{equation}
then has a unique analytic solution $X(\xi)$ close to $1$. \\ Hence, the coordinates change  
\begin{equation*}
	(\xi,x)\mapsto (\xi, \frac{x}{X(\xi)}= y)
	\label{log variables}
\end{equation*}
transforms $Q$ into a map of the form 
\begin{equation*}
	\eqsys{
		Q(\xi,y)= (\xi',y')\\
		\xi'= \xi + 2\pi\alpha + \sum_{i=1}^k \alpha_i(\xi)y^i + O(\eps\abs{y}^{k+1})\\
		y' = \lambda + \bar\beta_1\,y + \sum_{i=2}^k \beta_i(\xi)y^i + O(\eps\abs{y}^{k+1})  + O(\eps\abs{\lambda}\abs{y}) + O(\eps\abs{\lambda}),
	}
\end{equation*}
where 
\begin{equation*}
	\bar\beta_1 = \exp \frac{1}{2\pi}\int_0^{2\pi}{\log B_1(\xi)}\,d\xi = 1- 2\pi\eta + 2\pi^2\eta^2 + \eps M_1 +  \eps^2 M_2 + O(\eps\eta) + O(\eps^3,\eta^3),
	\label{beta1} 
\end{equation*}
$M_i$ being constants coming from the average of the order-$\eps^i$ terms in the Taylor's expansion of $\log B_1(\xi).$ \\
Just as for \eqref{difference equation}, there is a unique analytic solution $X^{(2)}(\xi)$, smoothly depending on the parameters - through $\bar\beta_1$ -, of the equation
\begin{equation*}
	\bar\beta_1^2 X^{(2)}(\xi + 2\pi\alpha) - \bar\beta_1 X^{(2)}(\xi) + \beta_2(\xi)= \frac{1}{2\pi}\int_0^{2\pi}\beta_2(\xi)\,d\xi = \bar\beta_2,
\end{equation*}
(where $\bar\beta_1^2$ is the $2$-power of $\bar\beta_1$).
The change of variables 
\begin{equation*}
	(\xi,y)\mapsto (\xi, y + X^{(2)}(\xi)\,y^2)
\end{equation*}
then transforms the non constant coefficient $\beta_2(\xi)$ into its average $\bar\beta_2$. \\
Generalizing, by composing the following changes of variables 
\begin{equation*}
	\eqsys{(\xi,y)\mapsto (\xi, y + X^{(i)}(\xi)\,y^i)\quad i=2,\cdots, k\\
		\bar\beta_1^i\,X^{(i)}(\xi + 2\pi\alpha) - \bar\beta_1\,X^{(i)}(\xi) + \beta_i(\xi) = \bar\beta_i}
\end{equation*}
and
\begin{equation*}
	\eqsys{(\xi,y)\mapsto (\xi + Z^{(i)}(\xi)\,y^i, y)\quad i=1,\cdots, k\\
Z^{(i)}(\xi + 2\pi\alpha) - Z^{(i)}(\xi) + \alpha_i(\xi) = \bar\alpha_i}
\end{equation*}
and denoting with $\Theta, R$ the final variables, $Q$ is eventually in the form entailed in \eqref{NF}
\begin{equation*}
	\eqsys{   Q(\Theta,R)= (\Theta',R')\\
		\Theta'= \Theta + 2\pi\alpha + \sum_{i=1}^k \bar\alpha_i R^i + O(\eps \abs{R}^{k+1}) + O(\abs{\lambda}\eps)\\
		R' = \lambda + \bar\beta_1\,R + \sum_{i=2}^k \bar\beta_i\, R^i + O(\eps\abs{R}^{k+1})  + O(\abs{\lambda}\eps),
	}   
	\label{P normal form order k}   
\end{equation*}
where $\bar\alpha_1$ and $\bar\beta_1$ are of order $1+O(\eps)$ while $\bar\alpha_i,\bar\beta_i$  for $i>1$, of order $O(\eps)$.\\  We thus have been able to confine the angle's dependency entirely in the terms $O(\cdots)$; in particular the terms $O(\abs{\lambda}\eps)$ vanish when no translation occurs. 

\end{proof}

\begin{proof}[Proof of Theorem \ref{theorem NH2}]
%\label{Normally hyperbolic invariant circle, again} 
Recall that we are now in the regime $|\eta|\lessdot \sqrt{\eps}$. To see that whenever $\eta \gtrdot \eps$ and $\abs{\tau}\leq \eta^2$, the diffeomorphism $Q$ possesses an invariant normally-attractive (resp. repulsive if $\eta<0$) circle, it just remains to harvest the consequences of the normal form \eqref{NF}.\\ 
In new coordinates, the diffeomorphism $Q$ is a perturbation of the normal form
\begin{equation*}
	N_{\eta,\tau}(\Theta,R) = (\Theta + 2\pi\alpha + \sum_{i=1}^k\bar\alpha_i R^i, \lambda (\tau,\eps) + \sum_{i=1}^k \bar\beta_i\,R^i),
\end{equation*}
which possesses an invariant circle $R=R_0$, solution of $R = \lambda + \sum_{k=1}\bar\beta_iR^i$. \\ By the implicit function theorem and the structure of the terms $\bar\beta_1$ and $\bar\beta_2$,  we have 
\begin{equation} R_0 = \frac{-\lambda}{\bar\beta_1 - 1} + O(\frac{\abs{\lambda}^2\abs{\bar\beta_2}}{\abs{\bar\beta_1 -1}^3})= R_{-} + O(\eps \frac{\abs{R_{-}}^2}{\abs{\bar\beta_1 - 1}}),
	\label{invariant circle}
\end{equation}
where $R_{-}$ reads more explicitly as $$R_{-} =\frac{-\tau + O(\eps)}{-2\pi\eta + \eps M_1 + O(\eps\eta) + O(\eta^2,\eps^2)}. $$
% \frac{-\lambda}{-2\pi\eta + \eps M + O(\eps^2,\eta^2)} + O\pa{\abs{\frac{\lambda}{-2\pi\eta + \eps M + O(\eps^2,\eta^2)}}^2}.
In order to apply the graph transform method for the existence of a normally hyperbolic invariant circle close to $R_0$,  we perform a last change of variables:
\begin{equation*}
	(\Theta,R)\mapsto (\Theta, R-R_0 = \tilde{R}).
\end{equation*}
Now centered at $R_0$, the diffeomorphism $Q$ reads
\begin{equation*}
	\eqsys{   Q(\Theta,\tilde R)= (\Theta',\tilde R')\\
		\Theta'= \Theta + 2\pi\alpha + \bar\alpha_1 R_0 + \sum_{i=1}^k \bar\alpha_i \tilde{R}^i + O(\eps\abs{R_0}\abs{\tilde{R}}) + O(\eps\abs{\tilde{R}}^{k+1}) + O(\eps\abs{R_0}^2) + O(\eps\abs{\lambda}) \\
		\tilde R' = (\bar\beta_1 + \sum_{i=2}^k i\,\bar\beta_i\, R_0^{i-1})\tilde{R} + O(\eps\abs{R_0}\abs{\tilde R}^2) + O(\eps\abs{\tilde R}^2) + O(\eps\abs{R_0}^2)  + O(\eps\abs{\lambda}).
	}   
\end{equation*}
Now $\tilde{R} = 0$ is the invariant circle of the normal form.
%, and the terms $ O(\eps\abs{R_0}^2)  + O(\eps\abs{\lambda})$ are perturbations.\\
More explicitly the order $1$ term in $\tilde{R}$ reads
\begin{equation}
	\tilde{R}' = (1 - 2\pi\eta + \eps M_1 + O(\eps\eta) + \sum_{i=2}^k i\,\bar\beta_i\, R_0^{i-1})\,\tilde R + O(\cdots)\,.
	\label{R tilde'}
\end{equation} 
In the region defined\footnote{Note that  each region of this type actually contains the curve $C_\alpha$ along which $\nu = \alpha + O(\eps)$. } by 
\begin{equation}
	\eqsys{\eta \geq \sqrt{2\pi}\abs{\nu - \alpha},\quad \text{hence}\quad\abs{\tau}\leq\eta^2\\
		\eta\gtrdot\eps,}
\end{equation}
the term $\bar\beta_2 R_0$ is of order $O(\eps \eta) + \frac{O(\eps^2)}{O(\eta)}\sim\eps$.\\ 
In the region defined by $\abs{\tau}\leq \eta^2$ and $\eta\gg\eps$, $R_0$ is of order $O(\eps) + O(\eps/\eta)$ and $O(\lambda)= O(\eta^2) + O(\eps)$; more specifically, 
\comment{defining 
	\begin{align*}
		\alpha(\eta;\eps) &= 2\pi\alpha + \bar\alpha_1 R_0\\
		\beta(\eta; \eps) &= 2\pi\eta + O(\eps)\\
		o(\beta(\eta;\eps)) &= O(\eps\abs{R_0}^2) + O(\eps^2) + O(\eps\eta^2),
	\end{align*}
	\begin{equation*}
		\eqsys{   Q(\Theta,\tilde R)= (\Theta',\tilde R')\\
			\Theta'= \Theta + \alpha(\eta;\eps) + (C + O(\eps))\tilde{R} + O(\eps\abs{\tilde{R}}^2)  + o(\beta(\eta;\eps)) \\
			\tilde R' = (1 - \beta(\eta;\eps))\tilde{R} + O(\eps\abs{\tilde{R}}^2) + o(\beta(\eta;\eps)),
		}  
\end{equation*}}

\begin{equation*}
	\eqsys{   Q(\Theta,\tilde R)= (\Theta',\tilde R')\\
		\Theta'= \Theta + 2\pi\alpha + \bar\alpha_1 R_0 + (C + O(\eps))\tilde{R} + O(\eps\abs{\tilde{R}}^2)  + O(\eps\abs{R_0}^2) + O(\eps^2) + O(\eps\eta^2) \\
		\tilde R' = (1 - 2\pi\eta + O(\eps))\tilde{R} + O(\eps\abs{\tilde{R}}^2) + O(\eps\abs{R_0}^2) + O(\eps^2) + O(\eps\eta^2),
	}  
\end{equation*}
having denoted by $C$ the twist $\frac{1-e^{-2\pi\eta}}{\eta}$ and one can see $Q$ as $Q = \tilde{N} + P^\eps,$ where
\begin{equation*}
	\eqsys{   \tilde{N}(\Theta,\tilde R)= (\Theta',\tilde R')\\
		\Theta'= \Theta + 2\pi\alpha + \bar\alpha_1 R_0 + (C + O(\eps))\tilde{R} + O(\eps\abs{\tilde{R}}^2) \\
		\tilde R' = (1 - 2\pi\eta + O(\eps))\tilde{R} + O(\eps\abs{\tilde{R}}^2).
	}  
\end{equation*}
Note that the term $O(\eps\abs{R_0}^2)$ of the perturbation $P^\eps$ is constant.
%and the perturbation $P^\eps$ satisfies $$P^\eps= O(\eps\abs{R_0}^2) + O(\eps^2) + O(\eps\eta^2) \sim  O(\eta^2\eps) \ll \eps\ll\eta,$$ in the considered region.  \\
In the defined region of the parameters, it is now straightforward the application of the graph transform method in the annulus $\abs{\tilde{R}}\leq 1$, containing the circle $R = R_0$. The preponderance of $1-2\pi\eta$ with respect to the other terms, with nontrivial reminders of smaller size, makes the compatibility conditions \eqref{condition GT1} satisfied. The existence of a normally-attractive  circle in a neighborhood of $R_0$ then follows. 

\end{proof}

\appendix

\section{On the elimination of parameters}
\label{Russmann theorem appendix}

In this section we discuss more in detail the content of Remark \ref{rmk parameters} and the elimination of parameters technique applied to the question of persistence of an invariant normally hyperbolic circle of a given Diophantine rotation. Since the perturbing terms considered here are just characterized as analytic functions with uniformly bounded derivatives, there is no reason why such persistence must hold for any choices of $\eta$ uniformly w.r.t. $\eps$, as it may happen in some special cases. We shall give a brief review on this matter, for the sake of clarity and completeness.
 
$\bullet$ In the context of Hamiltonian real analytic perturbations, in the frame of the following $n$-parameter family of dissipative twist vector fields on $\T^n\times \R^n$
\begin{equation}\label{dissip vf}
u_{\nu} = (\dot\teta = \alpha + r, \quad \dot r = -\eta I\cdot r - \eta(\nu -\alpha))\,,\quad \quad (\nu,\eta)\in\R^n\times\R,
\end{equation}
 where  ${I}$ is the identity matrix, the persistence problem has a positive, strong answer. More specifically, the vector field above leaves invariant the torus $T_0=\T^n\times \set{r=0}$, whose tangential dynamics is $\alpha$-quasiperiodic, while its normal dynamics is given by the linear term $-\eta r\partial_r$. If a Hamiltonian perturbation is considered $u_{\nu} + \eps X_H$, the dissipation $-\eta r\partial_r$ is  the unique non-Hamiltonian term and,  provided $\alpha$ is Diophantine, for a unique choice of the external parameter $\nu$, the survival of the reducible torus $T_0$ can be proved by means of a symplectic KAM scheme, thanks to the special structure ``Hamiltonian + homotecy in $r$-direction" that makes these systems conformally symplectic and the scheme smooth (with uniform bounds) in $\eta$, this allowing $\eta\to 0$ smoothly, reaching the symplectic regime. See for instance \cite{Locatelli-Stefanelli:2012, Massetti:ETDS} or \cite{Calleja-Celletti-delaLlave:2013} in the frame of conformally symplectic maps.\\
 $\bullet$ In the particular case of $1+\frac12$ degrees of freedom corresponding to a non-autonomous Hamiltonian perturbation $\eps X_H=(0, -\eps\partial_\teta f(\teta,t))$ in  $\T\times\R$, the system $u_\nu + \eps X_H$ models the well known dissipative spin-orbit problem evoked in the introduction.  An $\alpha$-diophantine torus persists as a global attractor for the dynamics, for a unique choice of $\nu$ and \emph{any} dissipation $\eta$ in an interval containing the origin, see \cite{Celletti-Chierchia:2009}. 
 
 \medskip
 
In \cite{Massetti:ETDS}, the persistence results above has been recovered by means of a technique known as ``elimination of parameters" , in the spirit of Moser, R\"ussmann, Herman, Chenciner and many others \cite{Moser:1967, Fejoz:2004, EFK:2015, Fayad-Krikorian:2009,Herman-Serg, Broer-Huitema-Takens:1990} . This technique is based on a normal form approach, recovering the existence of the invariant quasi-periodic object in two steps which, in terms of the specific example above \eqref{dissip vf} can roughly be summarized as follows: \\
1) Prove that $u_\nu + \eps X_H$ admits a unique, local, normal form as $u_\nu + \eps X_H = g_*u + b\partial_r$,  where $g$ is a diffeomorphism in the neighborhood of the identity in an appropriate subspace of symplectic trasformations, $u$ belongs to the affine subset of vector fields passing through $ (\alpha , -\eta r)$ and directed by Hamiltonian terms $(O(r), O(r^2))$ (i.e. the ones that admit the desired quasi-periodic invariant torus) and $b\in\R^n$ is a counter term in the neighborhood of $\alpha$, used to compensate the possible degeneracy of the system. According to the normal form, the torus $g(T_0)$ is translated by $b$ in the normal direction.\\
2) Since the translation/counter term $b=b(\alpha,\nu,\eta,\eps)$ is proved to depend smoothly on $\nu,\eta$, one may solve implicitly $b(\alpha,\nu,\eta,\eps) = 0$ in terms of $\nu$, thus proving the persistence of $g(T_0)$ for that choice of the parameter, and any $\eta$.
  
This is the content of \cite[Theorem 6.1]{Massetti:ETDS} from which one can deduce the particular $2$-dimensional case and phrase it in the following form, which we rewrite for convenience of the reader.
 
\textbf{Theorem 6.2 and Corollary 6.2 in \cite{Massetti:ETDS}} \textit{Let $\eps_0$ be the maximal value that the perturbation can attain. Every Diophantine $\alpha$ identifies a surface $(\eps,\eta)\mapsto \nu(\eps,\eta)$ in the space $(\eps,\eta,\nu) = [0,\eps_0]\times [-\eta_0,\eta_0] \times \R$, which is analytic in $\eps$, smooth in $\eta$, for which the following holds: for any parameters $(\eps,\eta,\nu(\eps,\eta))$,  the vector field $u_\nu + \eps X_H$ admits an invariant $\alpha$-quasi-periodic torus. This torus is $\eta$-normally attractive (resp. repulsive) if $\eta>0$ (resp. $\eta<0$).\\
	- Fixing $\alpha$ Diophantine and $\eps$ sufficiently small, there exists a unique curve  $C_\alpha$ (analytic in $\eps$, smooth in $\eta$) in the plane $(\eta,\nu)$ of the form $\nu = \alpha + O(\eps^2)$, along which the translation $b = b (\nu,\alpha,\eta,\eps) $ vanishes, so that the perturbed system $u_\nu + (0,\eps\partial_\teta f(\teta,t)) $ possesses an invariant torus carrying quasi-periodic motion of frequency $\alpha$. This torus is normally attractive (resp. repulsive) if $\eta>0$ (resp. $\eta<0$).
%	\label{corollario curve}
}
\begin{figure}[h!]
\begin{picture}(0,0)%
\includegraphics{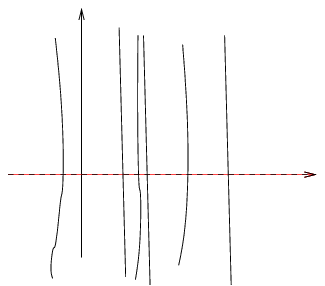}%
\end{picture}%
\setlength{\unitlength}{1657sp}%
\begingroup\makeatletter\ifx\SetFigFont\undefined%
\gdef\SetFigFont#1#2#3#4#5{%
	  \reset@font\fontsize{#1}{#2pt}%
	  \fontfamily{#3}\fontseries{#4}\fontshape{#5}%
	  \selectfont}%
\fi\endgroup%
\begin{picture}(4680,3541)(2146,-5399)
\put(4801,-4321){\makebox(0,0)[lb]{\smash{{\SetFigFont{5}{6.0}{\familydefault}{\mddefault}{\updefault}{\color[rgb]{0,0,0}$\alpha$}%
				}}}}
\put(6811,-4351){\makebox(0,0)[lb]{\smash{{\SetFigFont{5}{6.0}{\familydefault}{\mddefault}{\updefault}{\color[rgb]{0,0,0}$\nu$}%
				}}}}
\put(3976,-1981){\makebox(0,0)[lb]{\smash{{\SetFigFont{5}{6.0}{\familydefault}{\mddefault}{\updefault}{\color[rgb]{0,0,0}$\eta$}%
				}}}}
\put(2161,-4036){\makebox(0,0)[lb]{\smash{{\SetFigFont{5}{6.0}{\familydefault}{\mddefault}{\updefault}{\color[rgb]{0,0,0}Hamiltonian}%
				}}}}
\put(2161,-4261){\makebox(0,0)[lb]{\smash{{\SetFigFont{5}{6.0}{\familydefault}{\mddefault}{\updefault}{\color[rgb]{0,0,0}axes $\eta=0$}%
				}}}}
\put(4741,-2296){\makebox(0,0)[lb]{\smash{{\SetFigFont{5}{6.0}{\familydefault}{\mddefault}{\updefault}{\color[rgb]{0,0,0}$C_\alpha:\,b(\nu,\eta,\alpha,\eps)=0$}%
				}}}}
\label{figure curves}

\end{picture}%
\caption{Unlike the general situation, we can draw these curves up to $\eta=0$}
%\caption{The Cantor set of curves on the plane $\eps = const$, whose points correspond to an attractive/repulsive invariant torus}
\end{figure}

\medskip
 In is important to stress that the curves $C_\alpha$ pass continuously through $\eta = 0$ (the unique value of transition between the attractive and repulsive behavior) because of the Hamiltonian nature of the perturbations. Thanks to its special structure and intrinsic symmetries, this model, of great astronomical interest, has been widely investigated in the last decade; its conformally symplectic structure leads to atypical dynamical behaviors that in generic hyperbolic systems cannot happen. Accurate numerical computations have been tailored on that special model for studying the Poincar\'e spin-orbit map, in order to select the parameters in correspondence of which the existence/breakdown of the quasi-periodic attractor may occur,  or for computing possible rotation numbers of its orbits and so on (see for instance \cite{CallejaArxiv} and references therein). 
 
%  \begin{figure}[h!]
%\begin{picture}(0,0)%
%\includegraphics{Fig6-eps-converted-to.pdf}%
%\end{picture}%
%\setlength{\unitlength}{1657sp}%
%%
%\begingroup\makeatletter\ifx\SetFigFont\undefined%
%\gdef\SetFigFont#1#2#3#4#5{%
%  \reset@font\fontsize{#1}{#2pt}%
%  \fontfamily{#3}\fontseries{#4}\fontshape{#5}%
%  \selectfont}%
%\fi\endgroup%
%\begin{picture}(5930,4849)(6589,-6338)
%\put(7561,-1636){\makebox(0,0)[lb]{\smash{{\SetFigFont{5}{6.0}{\familydefault}{\mddefault}{\updefault}{\color[rgb]{0,0,0}$\varepsilon$}%
%}}}}
%\put(10126,-6271){\makebox(0,0)[lb]{\smash{{\SetFigFont{5}{6.0}{\familydefault}{\mddefault}{\updefault}{\color[rgb]{0,0,0}$\nu$}%
%}}}}
%\put(10801,-4201){\makebox(0,0)[lb]{\smash{{\SetFigFont{5}{6.0}{\familydefault}{\mddefault}{\updefault}{\color[rgb]{.56,0,0}$C_\alpha$ for non Hamiltonian perturbation}%
%}}}}
%\put(10261,-2986){\makebox(0,0)[lb]{\smash{{\SetFigFont{5}{6.0}{\familydefault}{\mddefault}{\updefault}{\color[rgb]{0,0,.82}$C_\alpha$ for Hamiltonian perturbation}%
%}}}}
%\put(9136,-2941){\makebox(0,0)[lb]{\smash{{\SetFigFont{5}{6.0}{\familydefault}{\mddefault}{\updefault}{\color[rgb]{0,0,0}$\eta$}%
%}}}}
%\label{cantor of curve 2}
%\end{picture}% 
%
% \end{figure}

On the other hand, under generic perturbation, there is no hope in general for a Theorem as above, since typically the perturbation obstructs the existence of such tori for small value of hyperbolicity. 
%We re-propose here a figure of \cite{Massetti:ETDS}, depicting such a phenomenon in terms of curves of ``good parameters" $b(\nu,\eps,\alpha,\eta) = 0$, given above for real analytic vector fields. \\
 Moreover, in high dimension, whether it is possible to get a similar reducibility result (i.e. up to very small values of dissipation) for a system as \eqref{dissip vf} in the case when the homotetic dissipative matrix is replaced by a general diagonalizable one or under generic perturbations (thus breaking the special spin-orbit like conformal symplecticity symmetries) is open so far. A first step forward is provided in \cite{Massetti:APDE}, in the context of general non-conservative diffeomorphisms of the $2n$-dimensional cylinder, close to having a reducible Diophantine torus and in the recent study \cite{Fayad-Massetti}, in the context of flows that leaves invariant a Diophantine torus of general normal hyperbolicity (i.e. the linear dynamics is given by a diagonalizable matrix $A$ of real eigenvalues).

\subsection{A normal form theorem and the case of general perturbations}\label{russ gen}\label{appendix curves}
For convenience of the reader, we recall here the general setting of \cite{Massetti:APDE} where  R\"ussmann's translated curve theorem is recovered in analytic category, as a special case of the general normal form \cite[Theorem 5.4]{Massetti:APDE}, which we recall below.  

Let $V$ be the space of germs along $\T^n\times\set{0}$ in $\T^n\times\R^m = \set{(\teta,r)}$ of real analytic diffeomorphisms. Fix $\alpha\in\R^n$ and $A\in\GL_m(\R)$, assuming that $A$ is diagonalizable with (possibly complex) eigenvalues $a_1,\ldots,a_m\in\C$. \\Let $U(\alpha,A)$ be the affine subspace of $V$ of diffeomorphisms of the form 
\begin{equation}
P(\teta,r) = (\teta + \alpha + O(r), A\cdot r + O(r^2)),
\label{P0}
\end{equation}
%where $\alpha\in\R^n$, $A\in\Mat_m(\R)$ is a diagonalizable matrix of eigenvalues $a_1,\ldots,a_m\in\C$ different from $0$%
 where $O(r^k)$ are terms of order $\geq k$ in $r$ which may depend on $\teta$. For these diffeomorphisms $\text{T}^n_0 = \T^n\times\set{0}$ is an invariant, reducible, $\alpha$-quasi-periodic torus whose normal dynamics at the first order is characterized by $a_1,\ldots,a_m.$ 

 Let $\cG$ be the space of germs of real analytic diffeomorphisms of $\T^n\times\R^m$ of the form 
\begin{equation}
G(\teta,r) = (h(\teta), R_0(\teta) + R_1(\teta)\cdot r) ,
\label{G}
\end{equation}
where $h$ is a diffeomorphism of the torus fixing the origin and $R_0, R_1$ are functions defined on the torus $\T^n$ with values in $\R^m$ and $\GL_m(\R)$ respectively and such that $\Pi_{\operatorname{Ker} (A - I)} R_0(0)=0$ and $\Pi_{\operatorname{Ker} [A,\cdot]} (R_1(0) - I) =0$, having denoted $I$ the identity matrix in $\Mat_m(\R)$ and $\Pi$ the projection on the subspace in subscript.\\
%Let us define the "correction map"
%\begin{equation*}
%T_\lambda: \T^n\times\R^m\to\T^n\times\R^m,\quad (\teta,r)\mapsto (\beta + \teta, b + (I + B)\cdot r), 
%\end{equation*}
%where $\beta\in\R^n$, $b\in\R^m$ and $B\in\Mat_m(\R)$ are such that 
%\begin{equation}
%(A - I)\cdot b = 0,\quad  [A,B]=0.
%\label{condition parameters}
%\end{equation}

\begin{theorem}[Theorem 5.4 \cite{Massetti:APDE}]
 \label{Russmann general}
 Let $\alpha$ satisfy $$\abs{k\cdot\alpha - 2\pi l}\geq \frac{\gamma}{\abs{k}^\mathtt{q}},\qquad \forall k\in\mathbb{Z}^n\setminus\set{0},\forall l\in\Z.$$
  On a neighborhood of $\T^n\times\set{0}\subset \T^n\times\R^n$, let $P^0\in U(\alpha, A^0)$ be a diffeomorphism of the form \[P^0(\teta,r) = (\teta + \alpha + p_1(\teta)\cdot r + O(r^2), A^0 \cdot r + O(r^2)) ,\] where $ A^0$ is invertible and has simple, real eigenvalues and such that \[\operatorname{det}\pa{\int_{\T^n} p_1(\teta)\,d\teta}\neq 0.\]
   If $Q'$ is close enough\footnote{Recall Remark \ref{neuroni sedati}.} to $P^0$ there exists a unique $A'$, close to $A^0$, and a unique $(G,P,\lambda)\in\cG\times U(\alpha, A')\times \R^n$, close to $(\id, P^0, 0)$, such that $Q' = T_\lambda\circ G\circ P \circ G^{-1},$ where $T_\lambda(\teta, r) = (\teta, \lambda + r)$.
 \end{theorem}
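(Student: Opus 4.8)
The plan is to recast the desired factorization $Q'=T_\lambda\circ G\circ P\circ G^{-1}$ as the problem of finding a zero of a nonlinear operator on a scale of Banach spaces of analytic germs, and to solve it by a quadratically convergent Newton scheme that absorbs the loss of analyticity caused by the small divisors of $\alpha$; this is exactly the mechanism of the abstract analytic inverse function theorem of \cite{Massetti:APDE} of which the present statement is an application (and which specializes to $m=1$ to give Theorem \ref{Russmann theorem}). Concretely, when $Q'=P^0$ the solution is $(G,P,\lambda,A')=(\id,P^0,0,A^0)$, and we look for a solution near this reference. Since $P$ is recovered from the rest through $P=G^{-1}\circ T_{-\lambda}\circ Q'\circ G$, the genuine unknowns are $(G,\lambda,A')\in\cG\times\R^n\times\GL_n(\R)$, and the equation is the requirement that $\widetilde P:=G^{-1}\circ T_{-\lambda}\circ Q'\circ G$ belong to $U(\alpha,A')$. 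Writing the Taylor expansion in $r$ of $\widetilde P$ as $\widetilde P(\teta,r)=(\teta+\alpha+a(\teta)+O(r),\ b(\teta)+B(\teta)\,r+O(r^2))$, membership in $U(\alpha,A')$ is precisely the system $a\equiv 0$, $b\equiv 0$, $B(\cdot)\equiv A'$ constant; here the counterterm $\lambda$ is the finite-dimensional handle used to solve $b\equiv 0$, i.e.\ to upgrade the merely translated torus of Theorem \ref{Russmann theorem} to an invariant one, while $A'$ is read off from $B$.

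Differentiating the operator $F:(G,\lambda,A')\mapsto(a,b,B-A')$ at the reference point and diagonalizing $A^0$ over $\R$ (possible since its eigenvalues are simple and real) yields a block-triangular linear system whose solvability rests on three ingredients: the cohomological operator $\psi\mapsto\psi\circ R_{2\pi\alpha}-\psi$ on $\T^n$, which is invertible on zero-average functions with a loss of analyticity width bounded by a power of the Diophantine exponent $\mathtt q$, by the condition on $\alpha$; a finite-dimensional block coupling $(\delta\lambda,\delta A')$ which is nonsingular exactly because $\det\!\big(\int_{\T^n}p_1\big)\neq 0$; and the normal reducibility equations for $\delta R_1$, whose divisors $a_j e^{ik\cdot\alpha}-a_i$ are bounded below uniformly in $k$ because the $a_i$ are real, hence introduce no small divisors. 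The normalizations $\Pi_{\ker(A-I)}R_0(0)=0$ and $\Pi_{\ker[A,\cdot]}(R_1(0)-I)=0$ built into $\cG$ are exactly what pin down the additive constants left free by the difference operators (translating the torus inside itself; conjugating the normal linear part within the commutant of $A$), so that $DF$ at the reference is an isomorphism between the appropriate analytic spaces, bounded from the larger onto the strictly smaller domain with tame estimates; by openness this persists for $Q'$ near $P^0$.

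Because $DF^{-1}$ loses a fixed amount of analyticity width, Picard iteration does not close, and one runs instead a Newton iteration $x_{j+1}=x_j-DF(x_j)^{-1}F(x_j)$ on a nested family of complex neighborhoods $\{|\Im\teta|<s_j\}\times\{|r|<\rho_j\}$ with $s_j\downarrow s_\infty>0$ and $\rho_j\downarrow\rho_\infty>0$: at each step one uses the invertibility and the tame bound for $DF(x_j)$ together with Cauchy estimates for the quadratic remainder $F(x_{j+1})=O(\|x_{j+1}-x_j\|^2)$, and the quadratic gain beats the geometric loss of domain, so the scheme converges provided $\|Q'-P^0\|$ is small enough on the initial domain; the limit yields $(G,\lambda,A')$, hence $P=G^{-1}\circ T_{-\lambda}\circ Q'\circ G$. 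Local invertibility of $F$ near the reference gives uniqueness of the solution in a neighborhood. Since all constants in the scheme are uniform, whenever $Q'$ depends smoothly (resp.\ analytically) on external parameters so does the solution $(G,P,\lambda,A')$, which is the parametrized version invoked in Remark \ref{rmk parameters}.

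The main difficulty is technical rather than conceptual: the operator $F$ is built from \emph{compositions} of diffeomorphisms, not mere products, so computing $DF$ and estimating the quadratic Newton error require careful Cauchy and composition estimates in weighted analytic norms, and these must be interlaced with the bookkeeping of analyticity losses at every Newton step so that the total loss stays summable and the limiting domain is nonempty. One also checks that the twist non-degeneracy $\det\!\big(\int_{\T^n}p_1\big)\neq0$, being an open condition, survives along the iteration so that $DF$ remains invertible throughout.
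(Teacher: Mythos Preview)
Your sketch is correct and follows essentially the same approach that the paper itself indicates: the paper does not reprove this result but, in Remark~\ref{neuroni sedati} and in the discussion following the statement in Appendix~\ref{Russmann theorem appendix}, explains that the solution $(G,P,\lambda,A')$ is obtained as the inverse of a nonlinear normal form operator via an analytic inverse function theorem (\cite[Theorem~A1]{Massetti:APDE}) implemented through a uniformly convergent Newton-like scheme, with the smooth parameter dependence following from uniformity of the constants. Your identification of the genuine unknowns, the role of the Diophantine cohomological equation, the use of the twist non-degeneracy $\det\big(\int p_1\big)\neq 0$ for the finite-dimensional block, the absence of additional small divisors in the normal direction thanks to the real spectrum of $A^0$, and the normalizations in $\cG$ fixing the kernels, all match the structure of the cited proof.
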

 Note that,  to be consistent with the notations in present paper, in this statement we denoted with $\lambda$ and $h$ the translation and diffeomorphism that in \cite{Massetti:APDE} is denoted $b$ and $\varphi$ respectively.\\
Phrasing the thesis, the graph of $\gamma = R_0\circ h^{-1}$ is a translated torus on which the dynamics is conjugated to $R_{\alpha}$ by $h$. By stability of the simple real spectrum, the normal dynamics is of the same nature as before, characterized by $A'$ close to $A^0$. \textit{Au passage}, we remark that the spectrum of $A^0$ is used as a set of free (real) parameters\footnote{This spectrum being real, the usual Diophantine conditions on $\alpha$ only are sufficient, with no need of Melnikov's ones.}, the variation of which allows to cancel an additional matricial obstruction counter-term that otherwise would be present in the translation function (see the proof of \cite[Theorem 5.1]{Massetti:APDE} or the more delicate ``th\'eor\`eme de conjugaison hypoth\'etique" of \cite{Fejoz:2004}).

R\"ussmann's theorem \ref{Russmann theorem} recalled in section \ref{second localization} is then a particular case of dimension two where, in order to stress the hyperbolicity character we wrote $1 + A^0$, $A^0\neq 0$, and where we directly gave the statement in terms of $h$ (where $h - \id = \varphi$ of Theorem \ref{Russmann general}, defined in \eqref{G}, $\id$ being the identity map). The solutions  $\lambda,\gamma$ and $h$ of the nonlinear equation $Q' = T_\lambda\circ G\circ P \circ G^{-1}$ are constructed as the inverse of an appropriate nonlinear normal form operator, defined from a convenient neighborhood of $(\id,P^0,0)$  to a neighborhood of $P^0$ by means of a Newton-like scheme which converges uniformly and is smooth w.r.t. the parameters on which the given diffeomorphism $Q'$ smoothly depend (see \cite[Section 2]{Massetti:APDE} and \cite[Appendix A]{Massetti:ETDS}). 
%In particular, it provide us with a change of coordinates $$(\teta,\tilde\rho)\mapsto \pa{h^{-1}(\teta)=\xi,\, \tilde\rho - \gamma(\teta)= x}$$ that transforms \[Q(\teta,\tilde{\rho}) = (\teta + 2\pi\alpha + \frac{1- e^{-2\pi\eta}}{\eta}\tilde\rho + \eps f(\teta,\tilde\rho), e^{-2\pi\eta}\tilde\rho + \tau + \eps g(\teta,\tilde\rho)) \] in $$Q(\xi,x)=(\xi + 2\pi\alpha + O(x), \lambda + e^{-2\pi\eta}x + O(x)). $$ 
Hence, in the case of our interest, R\"ussmann's theorem gives \[Q (\teta, \gamma(\teta)) = (\conj(\teta), \lambda + \gamma(\conj(\teta)))\] where $\lambda = \tau + O(\eps) = 2\pi\eta(\nu - \alpha) + O(\eps)$, $\varphi = \id + O(\eps)$, $\gamma = O(\eps)$ are analytic in $\eps$, and smooth in $\eta,\nu$.  
In order to prove that $\lambda = 0$ implicitly defines $\nu$, it suffices then to show that $\nu\mapsto \lambda(\nu,\eta,\eps)$ is a local diffeomorphism; since this is an open property and $Q$ is close to $P$, it suffices to show it for $P$, which is immediate. 
% and  because of the regularity of $b$, there is a positive constant $M$, independent of $\eta$ and $\nu$, such that whenever $\abs{\eta}> M\eps$, $\abs{b'}$ is bounded away from $0$. By the implicit function theorem in finite dimension, there exists a unique $\nu$ close to $\alpha$, such that $b=0$.
In fact, it suffices to observe that for the map $\R^3\ni(\eps,\nu,\eta)\mapsto \lambda(\eps,\nu,\eta)$ at $p_0 = (0,\alpha,\eta)$ gives
	$\lambda(p_0)= 0$ and that $\frac{\partial \lambda}{\partial\nu}_{|_{\eps=0}} = 2\pi\eta >0,$ which will remain so after perturbation for $\eta$ not too close to $0$.\\
%	In order to solve $\lambda=0$ when $\eps\neq 0$, we need to guarantee that its differential with respect to $\nu$ remains bounded away from $0$. 
	In fact, let now $\eps_0$ be the maximal admissible perturbation for R\"ussmann's theorem to apply, and consider the closed ball $B_{\eps_0}(p_0)$ of radius $\eps_0$ centered at $p_0\in\R^3$. Because of the regularity of $\lambda$ with respect to $\eps,\nu$ and $\eta$, 
	%we can find a compact $K\subset B_{\eps_0}(p_0)$ where
	there exists a positive constant $M$ independent of $\eps,\eta,\nu$ such that $\norm{\lambda}_{C^2}< M$. Considering the ball of radius $\eps<\eps_0$, the mean value theorem applied to $\frac{\partial \lambda}{\partial \nu}$ and the triangular inequality yields $\forall p_2,p_1\in B_{\eps/2}(p^0)$ that \[\abs{\frac{\partial \lambda}{\partial \nu}(p_2)}\geq \abs{\frac{\partial \lambda}{\partial\nu}(p_1)} - M\eps.\] Fixing $p_1 = p_0$, a sufficient condition for having $\abs{\frac{\partial \lambda}{\partial \nu}(p_2)}>\pi\eta$ is that $ \pi\eta/ 4 M > \eps.$ \\ Hence, for every fixed value of $\eps$, we can guarantee that the derivative of $\lambda$ with respect to $\nu$ is different from $0$, for those $\eta'$s such that $\eta \geq \eps 4M /\pi$, hence by the implicit function theorem, there exists $\nu$ such that $\lambda(\nu,\eps,\eta) = 0$.

\section{Classical tools}

\subsection{Graph transform lemmata}

\begin{proof}[Proof of Lemma \ref{inversion lemma}]
\begin{enumerate}
Since $f$ and $g$ are real analytic on $\T\times [-1,1]$, they are Lipschitz. 
\item If $u$ is a contraction, $\id + u$ is invertible  with $\Lip (\id + u)^{-1} \leq \frac{1}{1 - \Lip u}$. \\ Letting  $u = \id + \Theta(\id,\varpi)$ and $f$ being analytic, we have 
	\begin{align*}
		\abs{u(\teta_1)-u(\teta_2)} &\leq \Lip\varphi\frac{(1-e^{-2\pi\eta})}{\eta}\abs{\teta_1 - \teta_2} + \eps A_f\abs{(\teta_1,\varphi(\teta_1)) - (\teta_2,\varphi(\teta_2))}\\
		&\leq \pa{ \frac{(1-e^{-2\pi\eta})}{\eta} k + \eps A_f(1 + k)}\abs{\teta_1 - \teta_2},
	\end{align*}
	where $A_f = \max \pa{\sup_{\T\times [-1,1]}\abs{D_\teta f},\sup_{\T\times [-1,1]} \abs{D_r f}}$. \\ Since $\eps,k\ll1$, $\Lip u< 1$.
%\end{proof}

%\begin{lemma} The functions $\Theta$ and $R$ are Lipschitz on $\T\times [-1,1]$.
%\end{lemma}
%\begin{proof}
	\item It is immediate from the expression of $Q$ that for $z_1,z_2$ in $\T\times [-1,1]$
	
	\begin{equation*}
		\abs{R(z_1) - R(z_2)}\leq (e^{-2\pi\eta} + \eps A_g)\abs{z_1 - z_2}
	\end{equation*}
	where $A_g = \max \pa{\sup_{\T\times [-1,1]}\abs{D_\teta g},\sup_{\T\times [-1,1]} \abs{D_r g}}$, and
	\[\abs{\Theta(z_1) - \Theta(z_2)}\leq (1 + \frac{(1-e^{-2\pi\eta})}{\eta} + \eps A_f)\abs{z_1 - z_2}.\]
	\end{enumerate}
\end{proof}

\subsection{Difference equation on the torus}

Consider the complex extension $\T_{\C}= \C/{2\pi\Z}$ of the torus $\T=\R/{2\pi\Z}$ and the corresponding $s$-neighborhood defined using $\ell^\infty$-balls (in the real normal bundle of the torus): \[\T_s=\set{\teta\in\T_{\C}:\, \abs{\Im\teta}\leq s}\]

\comment{Let now $f: \T_s\to \C$ be holomorphic, and consider its Fourier expansion $f(\teta)=\sum_{k\in\Z}\,f_k(r)\,e^{i\,k\cdot\teta}$. In this context we introduce the so called "weighted norm": \[\abs{f}_s := \sum_{k\in\Z}\abs{f_k}\, e^{\abs{k}s}.\] It is a trivial fact that the classical sup-norm is bounded from above by the weighted norm: \[\sup_{\teta\in{\T_s}}\abs{f(\teta)}\leq\abs{f}_s\] and that $\abs{f}_s<+\infty$ whenever $f$ is analytic on its domain, which necessarily contains some $\normal{T}_{s'}$ with $s'>s$. In addition, the following useful inequalities hold if $f,g$ are analytic on $\T_{s'}$ \[\abs{f}_s\leq\abs{f}_{s'}\,\text{ for }\, 0<s<s',\] and \[\abs{fg}_{s'}\leq \abs{f}_{s'}\abs{g}_{s'}.\] For more details about the weighted norm, see for example \cite{Meyer:1975} and \cite{Chierchia:2003}.\\} 
Given a real analytic function, we shall consider its unique complex extension, holomorphic on some torus of a certain complex width $s$. Let $\mathcal{A}(\T_s)$ be the space of holomorphic functions $f : \T_s\to\C$ with finite Banach norm \[\abs{f}_s = \sup_{\T_s} \abs{f(\teta)}.\] 
\begin{lemma} 
	\label{lemma cohomological circle} Let $\alpha$ be Diophantine in the sense of \eqref{diophantine}, $g\in\mathcal{A}(\T_{s+\sigma})$ and let some constants $a,b\in\R\setminus\set{0}$ be given. There exist a unique $f\in\mathcal{A}(\T_{s})$ of zero average and a unique $\mu\in\R$ such that the following is satisfied
	\begin{equation}
		\mu + a f(\teta + 2\pi\alpha) - b f(\teta) = g(\teta),\quad \mu = \frac{1}{2\pi}\int_{\T}g(\teta)d\teta.
	\end{equation}
	In particular $f$ satisfies \[\abs{f}_s\leq \frac{C}{\gamma\sigma^{\tau + 1}}\abs{g}_{s+\sigma},\] $C$ being a constant depending on $\tau$.
\end{lemma}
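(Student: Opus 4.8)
The plan is to solve the equation by Fourier series. First I would expand $g \in \mathcal{A}(\T_{s+\sigma})$ as $g(\teta) = \sum_{k \in \Z} g_k e^{ik\teta}$ and seek $f(\teta) = \sum_{k} f_k e^{ik\teta}$ with $f_0 = 0$. Substituting into $\mu + a f(\teta + 2\pi\alpha) - b f(\teta) = g(\teta)$ and matching Fourier coefficients: the $k=0$ mode gives $\mu + (a-b) f_0 = g_0$; since we demand $f_0 = 0$ we are forced to take $\mu = g_0 = \frac{1}{2\pi}\int_\T g(\teta)\,d\teta$, which is exactly the claimed value. For $k \neq 0$, the equation reads $(a e^{2\pi i k\alpha} - b) f_k = g_k$, so $f_k = g_k / (a e^{2\pi i k\alpha} - b)$, provided the denominator never vanishes. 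This proves both existence and uniqueness of the pair $(f,\mu)$ at the formal level; reality of $\mu$ follows since $g$ is real-analytic (so $\overline{g_k} = g_{-k}$ and $g_0 \in \R$).

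The main point is the quantitative estimate, which requires a lower bound on the small divisors $|a e^{2\pi i k\alpha} - b|$. Here one must use the hypothesis $a, b \in \R\setminus\{0\}$: writing $a e^{2\pi i k\alpha} - b = (a\cos 2\pi k\alpha - b) + i a \sin 2\pi k\alpha$, the modulus is bounded below by $|a||\sin 2\pi k\alpha| = |a||\sin \pi(2k\alpha)|$. Now $|\sin\pi x| \geq 2\,\mathrm{dist}(x,\Z)$, and $\mathrm{dist}(2k\alpha, \Z) = \min_{l}|2k\alpha - l|$. Here I would invoke the Diophantine condition \eqref{diophantine}: for $k \neq 0$ there is $l$ with $|2k\alpha - l| = |(2k)\alpha - l| \geq \gamma/|2k|^{\mathtt{q}} \gtrsim \gamma/|k|^{\mathtt{q}}$ (absorbing the factor $2^{\mathtt q}$ into the constant). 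Hence $|a e^{2\pi i k\alpha} - b| \gtrsim |a|\gamma/|k|^{\mathtt{q}}$, so $|f_k| \lesssim |k|^{\mathtt{q}} |g_k|/(\gamma|a|)$. (A small care point: if $a\cos 2\pi k\alpha - b$ and the sine term could conspire, the bound still holds since we only used the imaginary part; and $a\neq 0$ is what makes $|\sin|$-term effective. One should also note the degenerate-looking case where $|a|=|b|$ is not actually problematic because the Diophantine bound keeps $2k\alpha$ away from integers, so $\sin 2\pi k\alpha \neq 0$ for $k\neq 0$.)

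Finally I would convert the coefficient bound into the sup-norm bound on $\T_s$. Using $|g_k| \leq |g|_{s+\sigma}\, e^{-|k|(s+\sigma)}$ (standard Cauchy estimate for Fourier coefficients of a function holomorphic on $\T_{s+\sigma}$), we get
\[
|f|_s \;\leq\; \sum_{k\neq 0} |f_k|\, e^{|k|s} \;\lesssim\; \frac{1}{\gamma}\sum_{k\neq 0} |k|^{\mathtt{q}}\, |g|_{s+\sigma}\, e^{-|k|(s+\sigma)} e^{|k|s} \;=\; \frac{|g|_{s+\sigma}}{\gamma}\sum_{k\neq 0} |k|^{\mathtt{q}} e^{-|k|\sigma}.
\]
The series $\sum_{k\neq 0}|k|^{\mathtt q} e^{-|k|\sigma}$ is bounded by $C(\mathtt q)\,\sigma^{-(\mathtt q+1)}$ (compare with $\int_0^\infty x^{\mathtt q} e^{-x\sigma}\,dx = \Gamma(\mathtt q+1)\sigma^{-(\mathtt q+1)}$, with $\mathtt q$ in place of the $\tau$ appearing in the statement), which yields $|f|_s \leq \frac{C}{\gamma \sigma^{\mathtt q + 1}}|g|_{s+\sigma}$, the asserted inequality. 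The main obstacle is genuinely the small-divisor lower bound, i.e. making sure the Diophantine condition on $\alpha$ controls $|a e^{2\pi i k\alpha} - b|$ uniformly in $k$; everything else is routine Fourier analysis.
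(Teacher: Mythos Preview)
The paper does not actually prove this lemma: it writes ``The proof is classical and we omit it'' and refers to R\"ussmann (1976) for optimal estimates. Your Fourier-series argument is precisely the classical proof being alluded to, and it is correct; the only cosmetic point is that the constant $C$ you obtain also depends on $|a|$ (through the lower bound $|a e^{2\pi ik\alpha}-b|\geq |a|\,|\sin 2\pi k\alpha|$), which the paper's statement suppresses but which is harmless in the applications where $a,b$ are close to $1$.
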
 
%\begin{proof}
%	Developing in Fourier series one has
%	\begin{equation*}
%		\mu + \sum_k (a\, e^{i2\pi\,k\,\alpha} - b)f_k e^{ik\teta} = \sum_k g_k e^{i\,k\teta};
%	\end{equation*}
%	we get $\mu = g_0 = \frac{1}{2\pi}\int_{\T}g(\teta)d\teta$ and
%	\begin{equation*}
%		f(\teta)= \sum_{k\neq 0} \frac{g_k}{a\, e^{i2\pi\,k\,\alpha} - b}e^{i\,k\teta}.
%	\end{equation*}
%	Remark that 
%	\begin{align*}
%		\abs{a\, e^{i2\pi\,k\,\alpha} - b}^2 &= (a - b)^2 \cos^2{\frac{2\pi k\alpha}{2}} + (a + b)^2\sin^2{\frac{2\pi k\alpha}{2}}\\&\geq (a + b)^2 \sin^2{\frac{2\pi k\alpha}{2}} = (a + b)^2\sin^2\frac{2\pi(k\alpha - l)}{2},
%	\end{align*}
%	with $l\in\Z$. Choosing $l\in\Z$ such that $\frac{2\pi(k\alpha - l)}{2}\in [-\frac{\pi}{2},\frac{\pi}{2}]$, we get
%	\begin{equation*}
%		\abs{a\, e^{i2\pi\,k\,\alpha} - b}\geq \frac{\pi^2}{4}\abs{a + b}\abs{k\alpha - l}\geq \frac{\pi^2}{4}\abs{a + b}\frac{\gamma}{\abs{k}^\tau}, 
%	\end{equation*}
%	using that $\abs{\sin x}\geq \frac{\pi}{2}\abs{x}, x\in [-\frac{\pi}{2},\frac{\pi}{2}]$ and condition \eqref{diophantine circle}. Hence the lemma.
%\end{proof}
The proof is classical and we omit it. We address the reader interested to optimal estimates to \cite{Russmann:1976}.

\subsection*{Acknowledgments} The author is grateful to A. Chenciner for many reasons, of many shades.  This work owes a lot to his support and advise in the years of her PhD, full of deep beautiful mathematics and delicate humanity.  Thanks also to J. F\'ejoz, L. Biasco, L. Chierchia,  A. Pousse, and L. Niederman for helpful remarks and fundamental conversations.

\subsection*{Funding} The author has been  supported by the  research project PRIN 2020XBFL ``Hamiltonian and dispersive PDEs" of the Italian Ministry of Education and Research (MIUR) and by the INdAM-GNAMPA research project ``Chaotic and unstable behaviors of infinite-dimensional dynamical systems".

\subsection*{Declarations} Data sharing not applicable to this article as no datasets were generated or analyzed during the current study.

\noindent
Conflicts of interest: The authors have no conflicts of interest to declare.

\end{document}